\documentclass[twoside,11pt]{article}
\usepackage{amsmath, amssymb, mathrsfs, amsfonts, eucal, epsfig}

 \newtheorem{theorem}{Theorem}[section]
 \newtheorem{lemma}[theorem]{Lemma}
 \newtheorem{proposition}[theorem]{Proposition}
 \newtheorem{corollary}[theorem]{Corollary}
 \newtheorem{definition}[theorem]{Definition}
 \newenvironment{proof}{\begin{trivlist} \item[]{\em Proof.}}{\end{trivlist}}
 \newcount\refno
\def\CC{{\mathbb C}}
\def\DD{{\mathbb D}}

 \def\RR{{\mathbb R}}
 \def\NN{{\mathbb N}}

 \title{\bf A Bloch type space associated with $\lambda$-analytic functions
 \thanks{
 $^\dag$Corresponding author: Yeli Niu.
 \newline
 \indent\,\, E-mail: hhwei@cslg.edu.cn (H.-H. Wei); khqian2023@163.com (K.-H. Qian); lizk@shnu.edu.cn (Zh.-K. Li); niuyl@shnu.edu.cn (Y.-L. Niu).}
}

\author{Haihua Wei$^{1}$, Kanghui Qian$^{2}$, Zhongkai Li$^{2}$ and Yeli Niu$^{2,\dag}$\\
{\small $^{1}$School of Mathematics and Statistics, Suzhou University of Technology}\\
{\small Changshu 215500, Jiangsu, China}\\
{\small $^{2}$Department of Mathematics, Shanghai Normal University}\\
{\small Shanghai 200234, China}
}

\date{}

\begin{document}
 \maketitle \setcounter{page}{1} \pagestyle{myheadings}
 \markboth{Wei, Li and Li}{Bloch space associated with generalized analytic functions}

 \begin{abstract}
 \noindent

For $\lambda\ge0$, the so-called $\lambda$-analytic functions are defined in terms of the (complex) Dunkl operators $D_{z}$ and $D_{\bar{z}}$.
In the paper we introduce a Bloch type space on the disk $\DD$ associated with $\lambda$-analytic functions, called the $\lambda$-Bloch space and denoted by ${\mathfrak{B}}_{\lambda}(\DD)$. Various properties of the $\lambda$-Bloch space ${\mathfrak{B}}_{\lambda}(\DD)$ are proved. We give a characterization of functions in ${\mathfrak{B}}_{\lambda}(\DD)$ by means of the higher-order operators $(D_z\circ z)^n$ for $n\ge2$.
A general integral operator is proved to be bounded from $L^{\infty}(\DD)$ onto ${\mathfrak{B}}_{\lambda}(\DD)$, and as an application, the dual relation of ${\mathfrak{B}}_{\lambda}(\DD)$ and the $\lambda$-Bergman space ($p=1$) is verified.

 \vskip .2in
 \noindent
 {\bf 2020 MS Classification:} 30H20, 30H10 (Primary), 30G30, 42A45 (Secondary)
 \vskip .2in
 \noindent
 {\bf Key Words and Phrases:}  Bloch space; Bergman space; Bergman projection; $\lambda$-analytic function
 \end{abstract}

 %%%%%%%%%%%%%%%%%%%%%%%%%%%%%%%%%%%%%%%%%%%%%%%%%%%%%%%%%%%%%%%%%%%%%%%%%%%%%%%%%%%%%%%%%%
\setcounter{page}{1}
%%%%%%%%%%%%%%%%%%%%%%%%%%%%%%%%%%%%%%%%%%%%%%%%%%%%%%%%%%%%%%%%%%%%%%%%%%%%%%%%%%%%%%%%%%

\section{Introduction}

In several works \cite{LL1,LW1,LW2,QW1}, the theories of the Hardy space and the Bergman space associated with the $\lambda$-analytic functions on the unit disk $\DD$ were developed, and in \cite{LL2,QWL,WLL}, their analogs on the upper half-plane were studied.
In this paper, we consider a Bloch type space associated with the $\lambda$-analytic functions on $\DD$.

The (complex) Dunkl operators $D_{z}$ and $D_{\bar{z}}$ in the complex plane $\CC$ are the substitutes of $\partial_z$ and $\partial_{\bar{z}}$, but involving a reflection term about the real axis respectively; and concretely, for $\lambda\ge0$ they are given by
\begin{align*}
    D_{z}f(z)=\partial_z f+
    \lambda \frac{f(z)-f(\bar{z})}{z-\bar{z}},\qquad
    D_{\bar{z}}f(z)=\partial_{\bar{z}}f-
      \lambda\frac{f(z)-f(\bar{z})}{z-\bar{z}}.
\end{align*}
For a domain $\Omega$ of $\CC$ that is symmetric about the real axis, a $C^2$ function $f$ defined on $\Omega$ is said to be
$\lambda$-analytic if $D_{\bar{z}}f\equiv0$.
The typical examples of $\lambda$-analytic functions are $z+\lambda(z+\bar{z})$ on $\CC$ and $1/(z|z|^{2\lambda})$ on $\CC\setminus\{0\}$.
Since for $\lambda\neq0$, $\lambda$-analytic functions are no longer differentiable about the complex variable $z$,
we always presuppose that they are in the $C^2$ class with respect to the real variables $x$ and $y$.

It was proved in \cite{LL1} that $f$ is $\lambda$-analytic in $\DD$ if and only if $f$ has the series representation
\begin{eqnarray}\label{anal-series-1-1}
f(z)=\sum_{n=0}^{\infty}c_{n}\phi_{n}^{\lambda}(z), \qquad |z|<1,
\end{eqnarray}
where
\begin{eqnarray*}
\phi_{n}^{\lambda}(z)=\epsilon_n\sum_{j=0}^{n}\frac{(\lambda)_{j}(\lambda+1)_{n-j}}
{j!(n-j)!}\bar{z}^{j}z^{n-j}, \qquad n\geq 0.
\end{eqnarray*}
with $\epsilon_n=\sqrt{n!/(2\lambda+1)_{n}}$. It is remarked that for $n\in\NN_0$ (the set of nonnegative integers), $\phi_n^{0}(z)=z^n$. In what follows we always assume that $\lambda>0$.

The measure on the unit disk $\DD$ associated with the operators $D_{z}$ and $D_{\bar{z}}$ is
\begin{eqnarray*}
d\sigma_{\lambda}(z)=c_{\lambda}|y|^{2\lambda}dxdy, \qquad z=x+iy,
\end{eqnarray*}
where $c_{\lambda}=\Gamma(\lambda+2)/\Gamma(\lambda+1/2)\Gamma(1/2)$ so that $\int_{\DD}d\sigma_{\lambda}(z)=1$.
Let $L_{\lambda}^p(\DD)$ denote the collection of measurable functions
$f$ on $\DD$ satisfying $\|f\|_{L_{\lambda}^p(\DD)}<\infty$, where $\|f\|_{L_{\lambda}^{p}(\DD)}=\left(\int_{\DD}|f(z)|^{p}d\sigma_{\lambda}(z)\right)^{1/p}$ for $0<p<\infty$, and $\|f\|_{L_{\lambda}^{\infty}(\DD)}=\|f\|_{L^{\infty}(\DD)}$ is given in the usual way.
The associated Bergman space $A^{p}_{\lambda}(\DD)$, named the $\lambda$-Bergman
space, consists of those elements in $L_{\lambda}^{p}(\DD)$ that are
$\lambda$-analytic in $\DD$, and the norm of $f\in A^{p}_{\lambda}(\DD)$ is written as $\|f\|_{A_{\lambda}^{p}}$ instead of $\|f\|_{L_{\lambda}^{p}(\DD)}$.

It follows from \cite[Theorem 6.8]{LW1} that, for $1\le p<\infty$ and for a function $f$ that is $\lambda$-analytic in $\DD$,
$f\in A^{p}_{\lambda}(\DD)$ if and only if $(1-|z|^{2})D_{z}\left(zf(z)\right)\in L^{p}_{\lambda}(\DD)$; and moreover
$$
\|f\|_{A_{\lambda}^{p}}\asymp\|(1-|z|^2)D_{z}\left(zf(z)\right)\|_{L_{\lambda}^{p}(\DD)}.
$$
For $p=\infty$, \cite[Lemma 6.6]{LW1} asserts that the mapping $f(z)\mapsto(1-|z|^{2})D_{z}\left(zf(z)\right)$ is
bounded from $A^{\infty}_{\lambda}(\DD)$ into $L^{\infty}_{\lambda}(\DD)$, but there exists an unbounded $\lambda$-analytic function on $\DD$
satisfying the condition $(1-|z|^{2})D_{z}\left(zf(z)\right)\in L^{\infty}_{\lambda}(\DD)$ (see (\ref{example-1}) later).
Based on these observations we now introduce a Bloch type space associated with the $\lambda$-analytic functions on $\DD$ as follows.

%From Proposition \ref{derivative-Bergman-a}, we have
%\begin{align}\label{boundedness-1}
%\sup_{z\in\DD}(1-|z|^2)|D_{z}\left(zf(z)\right)|\lesssim\|f\|_{A_{\lambda}^{\infty}}, \qquad f\in A^{\infty}_{\lambda}(\DD).
%\end{align}

\begin{definition}\label{Bloch-definition-a}
The $\lambda$-Bloch space ${\mathfrak{B}}_{\lambda}(\DD)$, or simply ${\mathfrak{B}}_{\lambda}$, consists of the $\lambda$-analytic functions on $\DD$ satisfying the condition
\begin{align}\label{Bloch-1}
\|f\|_{{\mathfrak{B}}_{\lambda}}:=\sup_{z\in\DD}(1-|z|^2)|D_{z}\left(zf(z)\right)|<\infty.
\end{align}
\end{definition}

The purpose of the paper is to study the $\lambda$-Bloch space ${\mathfrak{B}}_{\lambda}(\DD)$.

Note that for $\lambda=0$, the condition (\ref{Bloch-1}) is equivalent to
\begin{align}\label{Bloch-2}
|f(0)|+\sup_{z\in\DD}(1-|z|^2)|f'(z)|<\infty,
\end{align}
that is the defining form of the Bloch space ${\mathfrak{B}}(\DD)$ of the usual analytic functions.
The modern theory of the Bloch space originated from \cite{ACP} and \cite{Po1}, and its further development can be found
in \cite{An1,Ci1, DS1,HKZ1,Zhu1} and the references therein.
It plays a role in the theory of the Bergman space as the same as that ${\text {BMO}}$ plays in the theory of the Hardy space;
see the monographs \cite{DS1,HKZ1,Zhu1,Zhu2}.
However, the Bloch space has a longer history than the Bergman space, originating in a geometric form in the paper \cite{Blo1} of A. Bloch.
The condition for Bloch functions like (\ref{Bloch-2}) was motivated by Bloch's work and confirmed in \cite{Po1} and \cite{SeW1}.

It seems difficult to find the geometric correlation of the $\lambda$-Bloch space ${\mathfrak{B}}_{\lambda}(\DD)$ for $\lambda>0$,
and nevertheless, in the functional analytic aspect, this space is likely to drive many interesting topics and to play a significant role.
But it should be pointed out that, conformal mappings are no longer effective, since
product and composition of $\lambda$-analytic functions are nevermore $\lambda$-analytic in general.
Thus often times, a completely different approach must be employed.

The paper is organized as follows. Section 2 serves to review some basic knowledge about $\lambda$-analytic functions on the disk $\DD$, and
Section 3 is devoted to several fundamental properties of the $\lambda$-Bloch space ${\mathfrak{B}}_{\lambda}(\DD)$. In Section 4,
we give a characterization of functions in ${\mathfrak{B}}_{\lambda}(\DD)$ by means of the higher-order operators $(D_z\circ z)^n$ for $n\ge2$.
In the final section, a general integral operator is proved to be bounded from $L^{\infty}(\DD)$ onto ${\mathfrak{B}}_{\lambda}(\DD)$, and as an application, the dual relation of ${\mathfrak{B}}_{\lambda}(\DD)$ and the $\lambda$-Bergman space $A^1_{\lambda}(\DD)$ is verified.

The topic on the $\lambda$-analytic functions is motivated by C. Dunkl's work \cite{Dun3}, where he built up a framework associated with the dihedral group $G=D_k$ on the disk $\DD$. The researches in \cite{LL1,LW1,LW2,QW1} focus on the special case with $G=D_1$ having
the reflection $z\mapsto\overline{z}$ only, to find possibilities to develop a deep theory of associated function spaces. We note that C. Dunkl has a general theory named after him associated with reflection-invariance on the Euclidean spaces, see \cite{Dun1}, \cite{Dun2} and \cite{Dun4} for example.

Throughout the paper, the notation ${\mathcal{X}}\lesssim {\mathcal{Y}}$ or ${\mathcal{Y}}\gtrsim {\mathcal{X}}$ means that ${\mathcal{X}}\le c{\mathcal{Y}}$ for some positive constant $c$ independent of variables, functions, etc., and ${\mathcal{X}}\asymp {\mathcal{Y}}$ means that both ${\mathcal{X}}\lesssim {\mathcal{Y}}$ and ${\mathcal{Y}}\lesssim {\mathcal{X}}$ hold.

\section{Some facts on the $\lambda$-analytic functions}

For convenience of readers, we recall the basic theory of $\lambda$-analytic functions on the disk $\DD$, together with the associated harmonic functions.

For $0<p<\infty$, we denote by $L_{\lambda}^{p}(\partial\DD)$ the space of measurable functions $f$ on the circle $\partial\DD\simeq[-\pi,\pi]$ satisfying
$\|f\|_{L_{\lambda}^{p}(\partial\DD)}<\infty$, where $\|f\|_{L_{\lambda}^{p}(\partial\DD)}^p=\int_{-\pi}^{\pi}|f(e^{i\theta})|^p
dm_{\lambda}(\theta)$, and the measure $dm_{\lambda}$ on $\partial\DD$ is given by
\begin{eqnarray*}
dm_{\lambda}(\theta)=\tilde{c}_{\lambda}|\sin\theta|^{2\lambda}d\theta, \ \
\ \ \ \ \tilde{c}_{\lambda}=c_{\lambda}/(2\lambda+2).
\end{eqnarray*}
%Note that $L^{\infty}_{\lambda}(\partial\DD)$ is identical with the usual $L^{\infty}(\partial\DD)$.

It follows from \cite{Dun3,LL1} that the system
$$
\{\phi_{n}^{\lambda}(e^{i\theta})\}_{n=0}^{\infty}\cup\{e^{-i\theta}\phi_{n-1}^{\lambda}(e^{-i\theta})\}_{n=1}^{\infty}
$$
is an orthonormal basis of the Hilbert space $L_{\lambda}^2(\partial\DD)$. We note that $\phi_{0}^{\lambda}(z)\equiv1$,
and for $n\ge1$ and $z=re^{i\theta}$, from \cite[(1) and (3)]{LL1} we have
\begin{align}
&\phi_{n}^{\lambda}(z)=\epsilon_nr^{n}\left[\frac{n+2\lambda}{2\lambda}P_{n}^{\lambda}
(\cos\theta)+i\sin\theta P_{n-1}^{\lambda+1}(\cos\theta)\right],\label{basis-1}\\
&\bar{z}\overline{\phi_{n-1}^{\lambda}(z)}
=\epsilon_{n-1}r^{n}\left[\frac{n}{2\lambda}P_{n}^{\lambda}(\cos\theta)-
 i\sin\theta P_{n-1}^{\lambda+1}(\cos\theta)\right],\nonumber
\end{align}
where $P_{n}^{\lambda}(t)$ is the Gegenbauer
polynomial of degree $n$($\in\NN_0$) with parameter $\lambda$ (cf. \cite{Sz}).

In what follows, we write $\phi_n(z)=\phi_n^{\lambda}(z)$ for simplicity. According to \cite[(29)]{LL1}, one has
\begin{eqnarray}\label{phi-bound-1}
|\phi_n(z)|\le\epsilon_n^{-1}|z|^n\asymp (n+1)^{\lambda}|z|^n.
\end{eqnarray}
%\begin{eqnarray}\label{phi-bound-1}
%|\phi_n(z)|\le\epsilon_n^{-1}|z|^n\asymp (n+1)^{\lambda}|z|^n/\sqrt{\Gamma(2\lambda+1)}.
%\end{eqnarray}

The Laplacian associated with $D_{z}$ and $D_{\bar{z}}$, called the $\lambda$-Laplacian, is defined by
$\Delta_{\lambda}=4D_{z}D_{\bar{z}}=4D_{\bar{z}}D_{z}$,
which can be written explicitly as
\begin{eqnarray*}
\Delta_{\lambda} f=\frac{\partial^2 f}{ \partial x^2}+\frac
{\partial^2 f}{\partial y^2} +\frac{2\lambda}{y}\frac{\partial
f}{\partial y}-\frac{\lambda}{y^2} [f(z)-f(\bar{z})],\qquad z=x+iy.
\end{eqnarray*}
A $C^2$ function $f$ defined on $\DD$ is said to be $\lambda-$harmonic, if $\Delta_{\lambda}f=0$.

\begin{proposition} \label{anal-basis} {\rm(\cite[Proposition 2.2]{LL1})}
The functions $\phi_{n}(z)$ ($n\in\NN_0$) are $\lambda$-analytic and $\bar{z}\overline{\phi_{n-1}}(z)$ ($n\in\NN$) are $\lambda$-harmonic. Moreover, for $n\in\NN$,
$$
D_{z}\phi_{n}(z)=\sqrt{n(n+2\lambda)}\phi_{n-1}(z), \qquad D_{z}(\bar{z}\overline{\phi_{n-1}}(z))=-\lambda\phi_{n-1}(z),
$$
and
\begin{align}\label{Tzphi-2}
D_{z}(z\phi_{n-1}(z))=(n+\lambda)\phi_{n-1}(z).
\end{align}
\end{proposition}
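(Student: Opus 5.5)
The plan is to prove everything by direct computation on the explicit expansion of $\phi_n$:
\[
\phi_n(z)=\epsilon_n\sum_{j=0}^n a_j\,\bar z^{j}z^{n-j},\qquad a_j=\frac{(\lambda)_j(\lambda+1)_{n-j}}{j!(n-j)!}.
\]
Every identity then reduces to matching coefficients of monomials $\bar z^{j}z^{k}$. The tool is the action of the reflection part on monomials. For $f(z)=\bar z^j z^k$ the difference quotient is
\[
\frac{f(z)-f(\bar z)}{z-\bar z}=\frac{\bar z^jz^k-z^j\bar z^k}{z-\bar z}.
\]
This is a homogeneous polynomial of degree $j+k-1$: a signed sum of $\bar z^{m}z^{j+k-1-m}$ for $m$ between $\min(j,k)$ and $\max(j,k)-1$. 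Because this formula is messy, I would instead split $f$ into its even and odd parts under $z\mapsto\bar z$. On the even part the reflection term vanishes, and on the odd part it equals $f/(z-\bar z)$.

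\textbf{Step 1: $\lambda$-analyticity of $\phi_n$.} I would work in polar form using (\ref{basis-1}). On $f=r^n g(\theta)$ the operator $D_{\bar z}$ becomes
\[
\tfrac12 e^{i\theta}\bigl(\partial_r+\tfrac{i}{r}\partial_\theta\bigr)-\lambda\,\frac{\text{odd part}}{2ir\sin\theta}.
\]
The even part of $\phi_n$ is $\epsilon_n r^n\frac{n+2\lambda}{2\lambda}P_n^\lambda(\cos\theta)$ and the odd part is $\epsilon_n r^n i\sin\theta P_{n-1}^{\lambda+1}(\cos\theta)$. The statement $D_{\bar z}\phi_n=0$ then splits into two ODE identities for Gegenbauer polynomials. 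Both follow from the standard facts
\[
\frac{d}{dt}P_n^\lambda=2\lambda P_{n-1}^{\lambda+1},
\]
the recurrence $nP_n^\lambda=2(n+\lambda-1)tP_{n-1}^\lambda-(n+2\lambda-2)P_{n-2}^\lambda$, and the derivative identity
\[
(1-t^2)\frac{d}{dt}P_{n-1}^{\lambda+1}=-(n-1)tP_{n-1}^{\lambda+1}+(n+2\lambda)P_{n-2}^{\lambda+1}
\]
(or equivalent forms, cf. \cite{Sz}).

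\textbf{Step 2: $D_z\phi_n$.} Since $D_{\bar z}\phi_n=0$, I can write
\[
D_z\phi_n=D_z\phi_n+D_{\bar z}\phi_n=(\partial_z+\partial_{\bar z})\phi_n=\partial_x\phi_n,
\]
because the reflection terms cancel. So I only need $\partial_x\phi_n=\sqrt{n(n+2\lambda)}\phi_{n-1}$. From the monomial form, $\partial_x(\bar z^jz^{n-j})=j\bar z^{j-1}z^{n-j}+(n-j)\bar z^jz^{n-j-1}$. The coefficient of $\bar z^{j}z^{n-1-j}$ is then
\[
\epsilon_n\bigl[(j+1)a_{j+1}+(n-j)a_j\bigr]
=\epsilon_n\frac{(\lambda)_j(\lambda+1)_{n-1-j}}{j!(n-1-j)!}\bigl[(\lambda+j)+(\lambda+n-j)\bigr]
=\epsilon_n(n+2\lambda)\frac{(\lambda)_j(\lambda+1)_{n-1-j}}{j!(n-1-j)!}.
\]
Finally, $\epsilon_n(n+2\lambda)/\epsilon_{n-1}=\sqrt{n(n+2\lambda)}$, which gives the claimed formula.

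\textbf{Step 3: the identity (\ref{Tzphi-2}).} For the $\lambda$-analytic function $\phi_{n-1}$,
\[
D_z(z\phi_{n-1})=\phi_{n-1}+z\partial_z\phi_{n-1}+\lambda\frac{z\phi_{n-1}(z)-\bar z\phi_{n-1}(\bar z)}{z-\bar z}.
\]
I would combine this with $D_{\bar z}\phi_{n-1}=0$ and homogeneity. Euler's relation gives $z\partial_z+\bar z\partial_{\bar z}=n-1$ on $\phi_{n-1}$. Substituting $\partial_{\bar z}\phi_{n-1}=\lambda(\text{reflection quotient})$, the terms collapse to
\[
\phi_{n-1}+(n-1)\phi_{n-1}+\lambda\,\frac{z\phi_{n-1}(z)-\bar z\phi_{n-1}(\bar z)-\bar z\phi_{n-1}(z)+\bar z\phi_{n-1}(\bar z)}{z-\bar z}.
\]
The fraction equals $\phi_{n-1}(z)$, so the total is $(n+\lambda)\phi_{n-1}$, as claimed.

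\textbf{Step 4: the statements for $\bar z\overline{\phi_{n-1}}$.} Put $\psi=\bar z\overline{\phi_{n-1}(z)}$. Conjugation intertwines $D_z$ and $D_{\bar z}$: $\overline{D_{\bar z}g}=D_z\bar g$. Hence
\[
D_{\bar z}\psi=\overline{D_z(z\phi_{n-1})}=(n+\lambda)\overline{\phi_{n-1}},
\]
by Step 3. For $D_z\psi$ I would use the product rule with the reflection term directly, together with $D_z\overline{\phi_{n-1}}=0$. Since $\phi_{n-1}(\bar z)=\overline{\phi_{n-1}(z)}$ (the coefficients are real), the computation becomes
\[
D_z\psi=\bar z\,\partial_z\overline{\phi_{n-1}}+\lambda\frac{\bar z\,\overline{\phi_{n-1}}(z)-z\,\overline{\phi_{n-1}}(\bar z)}{z-\bar z}.
\]
Using $\partial_z\overline{\phi_{n-1}}=-\lambda(\text{quotient})$, this simplifies to $-\lambda\,\overline{\phi_{n-1}}(\bar z)=-\lambda\phi_{n-1}(z)$. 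Then $\lambda$-harmonicity follows:
\[
\Delta_\lambda\psi=4D_{\bar z}D_z\psi=-4\lambda D_{\bar z}\phi_{n-1}=0.
\]

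\textbf{Main obstacle.} I expect Step 1 to be the hardest part, namely verifying $D_{\bar z}\phi_n=0$ with the exact constants. If the Gegenbauer route gets messy, I would instead match monomial coefficients. I would write the reflection quotient of the odd part $\tfrac12\epsilon_n\sum_j (a_j-a_{n-j})\bar z^jz^{n-j}$ as a telescoping sum, and check that the resulting coefficients satisfy
\[
(j+1)a_{j+1}-\lambda\,(\text{partial sums of }a_k-a_{n-k})=0,
\]
using the ratio $a_{j+1}/a_j=(\lambda+j)(n-j)/\bigl((j+1)(\lambda+n-j-1)\bigr)$.
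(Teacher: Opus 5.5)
The paper gives no proof of this statement; it only quotes it from \cite[Proposition 2.2]{LL1}, so your direct computation has to stand on its own. Its structure is sound, and Steps 2--4 are correct as written. Each uses a clean device: $D_z=D_z+D_{\bar z}=\partial_x$ on $\lambda$-analytic functions; Euler's relation together with $\partial_{\bar z}\phi_{n-1}=\lambda\,\frac{\phi_{n-1}(z)-\phi_{n-1}(\bar z)}{z-\bar z}$ for (\ref{Tzphi-2}); and $\overline{D_{\bar z}g}=D_z\bar g$ for the claims about $\bar z\overline{\phi_{n-1}}$.

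Step 1, which you rightly identify as the crux, would not close as written, because of two arithmetic slips.

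\emph{The reflection factor.} If $f$ is odd under $z\mapsto\bar z$, then $\frac{f(z)-f(\bar z)}{z-\bar z}=\frac{2f(z)}{z-\bar z}$, not $\frac{f(z)}{z-\bar z}$. So in polar form the reflection part of $D_{\bar z}$ is $-\lambda\,\frac{\text{odd part}}{ir\sin\theta}$, not $-\lambda\,\frac{\text{odd part}}{2ir\sin\theta}$. With your factor even $n=1$ fails: from $\phi_1=\epsilon_1[(2\lambda+1)x+iy]$ you would get $D_{\bar z}\phi_1=\epsilon_1\lambda/2\neq0$. With the correct factor, the imaginary parts cancel identically. Writing $t=\cos\theta$ and using $\frac{d}{dt}P_n^\lambda=2\lambda P_{n-1}^{\lambda+1}$, the real part reduces to
\[
(1-t^2)\frac{d}{dt}P_{n-1}^{\lambda+1}(t)-(2\lambda+1)\,t\,P_{n-1}^{\lambda+1}(t)+\frac{n(n+2\lambda)}{2\lambda}P_n^\lambda(t)=0 .
\]
This is exactly $(2\lambda)^{-1}$ times the Gegenbauer differential equation for $P_n^\lambda$, so no recurrence is needed.

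\emph{The coefficient ratio in your fallback.} Since $(\lambda+1)_{n-j}=(\lambda+n-j)(\lambda+1)_{n-j-1}$, the correct ratio is $a_{j+1}/a_j=\frac{(\lambda+j)(n-j)}{(j+1)(\lambda+n-j)}$, not one with $\lambda+n-j-1$ in the denominator. The condition you display, $(j+1)a_{j+1}=\lambda\sum_{k\le j}(a_k-a_{n-k})$, is the right one; there the factor $2$ is implicitly handled correctly. Taking differences in $j$ reduces it to $(j+1)a_{j+1}-(j+\lambda)a_j+\lambda a_{n-j}=0$ for $0\le j\le n-1$. By the corrected ratio this is equivalent to $a_{n-j}=\frac{\lambda+j}{\lambda+n-j}\,a_j$, which is immediate from $(\lambda)_j(\lambda+j)=\lambda(\lambda+1)_j$ and $\lambda(\lambda+1)_{n-j-1}=(\lambda)_{n-j}$.

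With either correction Step 1 goes through, and the rest of your argument then proves the proposition.
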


A finite linear combination of elements in the system $\{\phi_{n}(z)\}_{n=0}^{\infty}$
is called a $\lambda$-analytic polynomial, and respectively, a finite linear combination of elements in the system
\begin{align}\label{harmonic-basis-1}
\{\phi_{n}(z)\}_{n=0}^{\infty}\cup\{\overline{z\phi_{n-1}(z)}\}_{n=1}^{\infty}
\end{align}
is called a $\lambda$-harmonic polynomial.
From \cite{Dun3}, the $\lambda$-Cauchy kernel $C(z,w)$ and the $\lambda$-Poisson kernel $P(z,w)$, which reproduce, associated with the measure $dm_{\lambda}$ on the circle $\partial\DD$,
all $\lambda$-analytic polynomials
and $\lambda$-harmonic polynomials respectively, are given by
\begin{align}
C(z,w)&=\sum_{n=0}^{\infty}\phi_{n}(z)\overline{\phi_{n}(w)},\label{Cauchy-kernel-2-1}\\
P(z,w)&=C(z,w)+\bar{z}w C(w,z).\nonumber
\end{align}
Note that the series in (\ref{Cauchy-kernel-2-1}) is convergent absolutely for $zw\in\DD$ and uniformly for $zw$ in a compact subset of $\DD$,
and by \cite[Theorems 1.3 and 2.1]{Dun3}, for $zw\in\DD$ we have
\begin{align*}
C(z,w)=\frac{1}{1-z\bar{w}}P_{0}(z,w), \qquad
P(z,w)=\frac{1-|z|^{2}|w|^{2}}{|1-z\bar{w}|^{2}}P_{0}(z,w),\nonumber
\end{align*}
where
\begin{align*}
P_{0}(z,w)&=\frac{1}{|1-zw|^{2\lambda}}{}_2\!F_{1}\Big({\lambda,\lambda
\atop
  2\lambda+1};\frac{4({\rm Im} z)({\rm Im}
  w)}{|1-zw|^{2}}\Big)\nonumber\\
&=\frac{1}{|1-z\bar{w}|^{2\lambda}}{}_2\!F_{1}\Big({\lambda,\lambda+1
\atop
  2\lambda+1};-\frac{4({\rm Im} z)({\rm Im}
  w)}{|1-z\bar{w}|^{2}}\Big),
\end{align*}
and ${}_2\!F_{1}[a,b;c;t]$ is the Gauss hypergeometric function.

A $\lambda$-harmonic function in $\DD$ has a series representation
in terms of the system (\ref{harmonic-basis-1}) as given in the following proposition.

\begin{proposition} \label{har-series} {\rm(\cite[Theorem 3.1]{LL1})}
If $f$ is a $\lambda$-harmonic function in $\DD$, then there are two
sequences $\{c_n\}$ and $\{\tilde{c}_n\}$ of complex numbers, such
that
\begin{eqnarray*}
f(z)=\sum_{n=0}^{\infty}
c_n \phi_{n}(z)+
  \sum_{n=1}^{\infty} \tilde{c}_n \bar{z}\overline{\phi_{n-1}(z)}
\end{eqnarray*}
for $z\in\DD$. Moreover, for each real $\gamma$, the series
$\sum_{n\ge1}n^{\gamma}(|c_n|+|\tilde{c}_n|)r^n$ converges uniformly
for $r$ in every closed subset of $[0,1)$.
\end{proposition}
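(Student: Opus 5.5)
This statement, Proposition \ref{har-series}, is quoted from \cite[Theorem 3.1]{LL1}. I would prove it by expanding $f$ in angular harmonics on circles and using that $\Delta_\lambda$ separates in polar coordinates.

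\textbf{Step 1: the $\lambda$-Laplacian in polar form.} For fixed $0<r<1$, the function $\theta\mapsto f(re^{i\theta})$ is smooth, so it lies in $L^2_\lambda(\partial\DD)$. Expand it in the orthonormal basis $\{\phi_n(e^{i\theta})\}\cup\{e^{-i\theta}\phi_{n-1}(e^{-i\theta})\}$. By (\ref{basis-1}) and homogeneity, $\phi_n(re^{i\theta})=r^n\phi_n(e^{i\theta})$ and $\bar z\overline{\phi_{n-1}(z)}=r^ne^{-i\theta}\phi_{n-1}(e^{-i\theta})$. Write $\Delta_\lambda$ in polar coordinates as
$$
\Delta_\lambda=\partial_r^2+\frac{2\lambda+1}{r}\partial_r+\frac{1}{r^2}\Lambda_\theta,
$$
where $\Lambda_\theta$ is an angular operator containing the reflection term. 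Both basis functions of degree $n$ are $\lambda$-harmonic (Proposition \ref{anal-basis}) and homogeneous of degree $n$. Hence they are eigenfunctions of $\Lambda_\theta$ with eigenvalue $-n(n+2\lambda)$.

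\textbf{Step 2: the coefficient ODE.} Let $a_n(r)$ and $b_n(r)$ be the Fourier coefficients of $f(re^{i\theta})$. Pair $\Delta_\lambda f=0$ with the basis elements. The operator $\Lambda_\theta$ is symmetric with respect to $dm_\lambda$, and the integrations by parts produce no boundary terms: the weight $|\sin\theta|^{2\lambda}$ vanishes at $\theta=0,\pi$, and the reflection term is regular at $y=0$ because $f\in C^2$. This gives the Euler equation
$$
a_n''+\frac{2\lambda+1}{r}a_n'-\frac{n(n+2\lambda)}{r^2}a_n=0,
$$
and the same equation for $b_n$. The solutions are $r^n$ and $r^{-n-2\lambda}$. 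The coefficients are bounded as $r\to0$, by continuity of $f$ at $0$, so $a_n=c_nr^n$ and $b_n=\tilde c_nr^n$. Therefore $f(re^{i\theta})$ equals the stated series, in the $L^2_\lambda$ sense, on each circle.

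\textbf{Step 3: convergence.} Fix $r_0<1$. By Parseval, $|c_n|r_0^n\le\|f(r_0\cdot)\|_{L^2_\lambda}$, and the same bound holds for $\tilde c_n$. So $|c_n|+|\tilde c_n|\lesssim r_0^{-n}$ for every $r_0<1$. Then for $r\le r_1<r_0$, the series $\sum n^\gamma(|c_n|+|\tilde c_n|)r^n$ is dominated by $\sum n^\gamma(r_1/r_0)^n<\infty$, which gives the uniform convergence claimed. Combined with (\ref{phi-bound-1}), the series for $f$ then converges absolutely and locally uniformly, so the $L^2$ identity becomes a pointwise identity.

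\textbf{Main obstacle.} The hardest part is Step 1: justifying the polar separation of $\Delta_\lambda$, with its difference term $\frac{\lambda}{y^2}[f(z)-f(\bar z)]$, and the self-adjointness of $\Lambda_\theta$ on $L^2_\lambda(\partial\DD)$. I would check this directly on the even and odd parts of $f$ in $y$, since $\Delta_\lambda$ preserves that parity. On even parts it reduces to the Gegenbauer operator for $P_n^\lambda$. On odd parts, after writing $f=\sin\theta\cdot g$, it reduces to the Gegenbauer operator for $P_{n-1}^{\lambda+1}$. These two reductions match the explicit forms in (\ref{basis-1}).
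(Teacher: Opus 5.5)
The paper gives no proof of this proposition; it quotes it from \cite[Theorem 3.1]{LL1}. So there is no in-paper argument to compare against. Your separation-of-variables argument is correct and self-contained. It uses only three ingredients already recorded in the paper: completeness of $\{\phi_n(e^{i\theta})\}\cup\{e^{-i\theta}\phi_{n-1}(e^{-i\theta})\}$ in $L^2_\lambda(\partial\DD)$, Proposition~\ref{anal-basis}, and homogeneity. Compared with a Poisson-integral route (writing $f(\rho z)$ as the $\lambda$-Poisson integral of $f(\rho\,\cdot)$ and expanding $P(z,w)$), your route needs no maximum principle or uniqueness theorem for $\Delta_\lambda$. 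Boundedness of $a_n(r)$ as $r\to0$, which kills $r^{-n-2\lambda}$, does that job.

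There are a few points to tighten.

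\textbf{The polar form.} What you call the main obstacle is a direct chain-rule computation. Since $\frac{2\lambda}{y}\partial_y=\frac{2\lambda}{r}\partial_r+\frac{2\lambda\cot\theta}{r^2}\partial_\theta$, one gets explicitly
\[
\Lambda_\theta u=u''+2\lambda\cot\theta\,u'-\frac{\lambda}{\sin^2\theta}\,\bigl[u(\theta)-u(-\theta)\bigr].
\]
The relation $\Lambda_\theta e=-n(n+2\lambda)e$ already follows from $\lambda$-harmonicity plus homogeneity, as you observed. The even/odd Gegenbauer reduction is therefore only a consistency check and is not needed.

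\textbf{The reflection term.} On its own it is not regular at $y=0$: it behaves like $2\lambda f_y(x,0)/y$, and only its combination with $\frac{2\lambda}{y}f_y$ is continuous. What your pairing actually needs is integrability. Near $\theta=0$ and $\theta=\pm\pi$, both $|\cot\theta|\,|\sin\theta|^{2\lambda}$ and $|\sin\theta|^{2\lambda-2}|u(\theta)-u(-\theta)|$ are $O(|\theta|^{2\lambda-1})$ and $O(|\pi\mp\theta|^{2\lambda-1})$ respectively, hence integrable because $\lambda>0$. With that in hand, symmetry of $\Lambda_\theta$ follows from two facts:
\begin{itemize}
\item the identity $|\sin\theta|^{2\lambda}(u''+2\lambda\cot\theta\,u')=(|\sin\theta|^{2\lambda}u')'$ on $(0,\pi)$ and on $(-\pi,0)$, whose boundary terms vanish since the weight does;
\item symmetrizing the reflection part under $\theta\mapsto-\theta$.
\end{itemize}

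\textbf{Regularity.} The function $\theta\mapsto f(re^{i\theta})$ is only $C^2$, not smooth. That is all you use, including when you differentiate $a_n(r)$ twice under the integral sign.
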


As stated in the first section, a $\lambda$-analytic
function $f$ on $\DD$ has a series representation as in (\ref{anal-series-1-1}); and moreover, such an $f$ could be characterized by a Cauchy-Riemann type system.

\begin{proposition} \label{anal-thm} {\rm(\cite[Theorem 3.7]{LL1})}
For a $C^2$ function $f=u+iv$ defined on $\DD$,
the following statements are equivalent:

{\rm (i)}  $f$ is $\lambda$-analytic;

{\rm (ii)} $u$ and $v$ satisfy the generalized Cauchy-Riemann equations
\begin{align*}
\partial_{x}u=D_{y}v \quad \hbox{and} \quad  D_{y}u=-\partial_{x}v,
\end{align*}
where
\begin{align*}
D_y u(x,y)=\partial_y u(x,y)+\frac{\lambda}{y}\left[u(x,y)-u(x,-y)\right];
\end{align*}

{\rm (iii)}  $f$ has the series representation in (\ref{anal-series-1-1}),
where for each real $\gamma$, the series
$\sum_{n\ge1}n^{\gamma}|c_n|r^n$ converges uniformly for $r$ in
every closed subset of $[0,1)$.
\end{proposition}

%The next proposition states a relation of $\lambda$-analytic functions and the usual analytic functions.
%
%\begin{proposition} \label{anal-anal} {\rm(\cite[Theorem 3.9]{LL1})}
%For a function $f$ defined in $\DD$, $f$ is $\lambda-$analytic in
%$\DD$ if and only if there exists an analytic function $f_0$ in
%$\DD$ such that
%$$
%f(z)=A_{\lambda}f_0(z),
%$$
%where
%\begin{eqnarray*} \label{A-operator-2}
%A_{\lambda}f_0(z)=\frac{\tilde{c}_{\lambda-1/2}}{4^{\lambda}\,i}\int_{|\xi|=1}f\left(x+iy\,{\rm Re}\,\xi\right)
%\left(1+\xi^{-1}\right)^2 \left|1-\xi^2\right|^{2\lambda-1}d\xi
%\end{eqnarray*}
%with $z=x+iy$.
%The inversion of $A_{\lambda}$ is given by
%\begin{eqnarray*}
%A_{\lambda}^{-1}f(w)=\frac{\tilde{c}_{\lambda}}{2^{2\lambda}i}\oint_{|z|=r}
%\frac{f(z)}{(\bar{z}-w)^{\lambda}(z-w)^{\lambda+1}}|z-\bar{z}|^{2\lambda}dz
%\end{eqnarray*}
%for $|w|<r<1$.
%\end{proposition}

As usual, the $p$-means of a function $f$ defined on $\DD$, for $0<p<\infty$, are given by
\begin{eqnarray*}
M_p(f;r)=\left\{\int_{-\pi}^{\pi}|f(re^{i\theta})|^p\,dm_{\lambda}(\theta)\right\}^{1/p},\qquad 0\le r<1;
\end{eqnarray*}
and $M_{\infty}(f;r)=\sup_{\theta}|f(re^{i\theta})|$.
The $\lambda$-Hardy space $H_{\lambda}^p(\DD)$ is the collection of $\lambda$-analytic functions on $\DD$ satisfying
$$
\|f\|_{H_{\lambda}^p}:=\sup_{0\le r <1}M_p(f;r)<\infty.
$$
Obviously $H_{\lambda}^{\infty}(\DD)$ is identical with $A_{\lambda}^{\infty}(\DD)$.

The fundamental theory of the $\lambda$-Hardy spaces $H_{\lambda}^p(\DD)$ for
$$
p\ge p_0:=\frac{2\lambda}{2\lambda+1}
$$
was studied in \cite{LL1}. The following theorem asserts the existence of boundary values of functions in $H_{\lambda}^p(\DD)$.

\begin{theorem} \label{Hardy-boundary-value-a} {\rm (\cite[Theorem 6.6]{LL1})}
Let $p\ge p_0$ and $f\in H_{\lambda}^p(\DD)$. Then for almost every $\theta\in[-\pi,\pi]$, $\lim f(r
e^{i\varphi})=f(e^{i\theta})$  exists as $r e^{i\varphi}$ approaches to the point
$e^{i\theta}$ nontangentially, and if $p_0<p<\infty$, then
\begin{align*}
\lim_{r\rightarrow1-}\int_{-\pi}^{\pi}|f(r
e^{i\theta})-f(e^{i\theta})|^{p} dm_{\lambda}(\theta)=0
\end{align*}
and $\|f\|_{H^{p}_{\lambda}}^p\asymp\int_{-\pi}^{\pi}|f(e^{i\theta})|^{p} dm_{\lambda}(\theta)$.
\end{theorem}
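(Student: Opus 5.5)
This result is quoted from \cite[Theorem 6.6]{LL1}, so inside the present paper it needs no new proof. If I were to reprove it, I would follow the classical route for $H^p$ of the disk, with the Dunkl kernels replacing the Cauchy and Poisson kernels. The strategy is to reduce everything to a representation of $f$ as a $\lambda$-Poisson integral, or, for small $p$, to a majorization of $|f|^{p}$ by such integrals. Nontangential convergence then comes from a maximal function estimate on the weighted circle $(\partial\DD,dm_\lambda)$.

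For $p\ge 1$, take $f\in H^p_\lambda$ and the dilates $f_r(z)=f(rz)$. These are $\lambda$-analytic near the closed disk, so $f_r(z)=\int_{-\pi}^{\pi}P(z,e^{i\theta})f(re^{i\theta})\,dm_\lambda(\theta)$. This follows from the reproducing property of $P$ applied to the partial sums of the series (\ref{anal-series-1-1}), which converge uniformly by Proposition \ref{anal-thm}(iii). The family $\{f(re^{i\cdot})\}$ is bounded in $L^p_\lambda(\partial\DD)$. By weak-$*$ compactness it has a limit $g\in L^p_\lambda$, or a measure when $p=1$, and $f$ is the $\lambda$-Poisson integral of it. One then needs the bound
$$
|P(z,e^{i\varphi})|\lesssim \frac{1-|z|^2}{|1-z e^{-i\varphi}|^{2}}\cdot\frac{1}{m_\lambda(I(z,\varphi))},
$$
where $I$ is the arc centered at $\theta$ of length $\asymp |1-ze^{-i\varphi}|$. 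This comes from the hypergeometric formula for $P_0$, and it shows that $P$ is an approximate identity dominated by the Hardy--Littlewood maximal function on the space of homogeneous type $(\partial\DD,dm_\lambda)$. That gives nontangential limits a.e. and identifies the limit with $g$. For $p_0<p<\infty$, norm convergence and $\|f\|_{H^p_\lambda}^p\asymp\|g\|^p_{L^p_\lambda}$ then follow from dominated convergence via the maximal function. For $p=1$ one must also rule out a singular measure, which is the F.\ and M.\ Riesz type step.

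The main obstacle is the range $p_0\le p<1$. There the Poisson representation fails, and I would look for a subharmonicity substitute. The plan is to attach to $f=u+iv$ a conjugate system satisfying the generalized Cauchy--Riemann equations of Proposition \ref{anal-thm}(ii). One then shows that $|F|^{q}$ is $\lambda$-subharmonic for $q\ge p_0=2\lambda/(2\lambda+1)$, mirroring Stein--Weiss, where the exponent threshold $(n-1)/n$ with "dimension" $2\lambda+2$ gives exactly $p_0$. The difficulty is the nonlocal reflection term in $D_y$: the usual computation of $\Delta(|F|^q)$ picks up difference terms $F(z)-F(\bar z)$ whose signs must be controlled. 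I would first try to handle these by symmetrization in $y\mapsto -y$, or by lifting $f$ to an axially symmetric analytic system in higher dimensions via the intertwining operator. Once subharmonicity of $|f|^{q}$ is available, $|f|^{q}$ is majorized by the $\lambda$-Poisson integral of an $L^{p/q}$ function with $p/q>1$. The previous paragraph then yields nontangential limits, and norm convergence for $p>p_0$ follows by the same maximal-function domination.
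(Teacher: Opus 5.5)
The paper gives no proof of this theorem. It is imported from \cite[Theorem 6.6]{LL1}, so your opening remark is exactly the paper's treatment. Your reproof sketch, however, has a genuine gap in the range $p_0\le p<1$.

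Majorizing $|f|^q$, or a symmetrized version of it, by a $\lambda$-Poisson integral $P[h]$ with $h\in L^{p/q}_\lambda(\partial\DD)$ only controls the nontangential maximal function of $f$. That gives nontangential \emph{boundedness} a.e., not convergence. Your second paragraph cannot upgrade this, because for $p<1$ the function $f$ is in general not a $\lambda$-Poisson integral. For instance, $C(z,1)=(1-z)^{-1}|1-z|^{-2\lambda}$ lies in $H^p_\lambda(\DD)$ for every $p<1$. Its imaginary part has boundary values $\asymp \mathrm{sgn}(\theta)|\theta|^{-1-2\lambda}$ near $\theta=0$. These are not $dm_\lambda$-integrable, so it is not even the Poisson integral of a finite measure.

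What is missing is a local Fatou theorem for $\lambda$-harmonic functions: nontangential boundedness at each point of a set $E$ implies nontangential limits a.e.\ on $E$. This is the role Calder\'on's theorem plays in the Stein--Weiss theory. Without it you get neither the a.e.\ limits nor, consequently, the norm convergence for $p_0<p<1$. A dense-class argument would need density of polynomials in $H^p_\lambda$, which is normally deduced from this very theorem. At $p=p_0$ there is no $q$ with $p_0\le q<p$ at all, so the majorant is only the Poisson integral of a measure, and again only boundedness follows.

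The subharmonicity step, which you leave open, does go through once it is set up correctly. No conjugate system needs to be attached, since $(u,v)$ already satisfies Proposition~\ref{anal-thm}(ii). For $|f|^q$ itself, the reflection term $-\frac{\lambda}{y^2}\bigl(|f(z)|^q-|f(\bar z)|^q\bigr)$ in $\Delta_\lambda$ has no sign. Use instead the even function $\Phi_q(z)=\bigl(|f(z)|^2+|f(\bar z)|^2\bigr)^{q/2}$, on which $\Delta_\lambda=\partial_x^2+\partial_y^2+\frac{2\lambda}{y}\partial_y$.

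Split $u,v$ into parts even and odd in $y$. The generalized Cauchy--Riemann equations become $\partial_xu_e=\partial_yv_o+\frac{2\lambda}{y}v_o$ and $\partial_yu_e=-\partial_xv_o$, together with $\partial_xv_e+\partial_yu_o+\frac{2\lambda}{y}u_o=0$ and $\partial_yv_e=\partial_xu_o$. So $(u_e,-v_o)$ and $(v_e,u_o)$ are axially symmetric Stein--Weiss systems in ambient dimension $N=2\lambda+2$. Moreover $\Phi_q=2^{q/2}(u_e^2+v_o^2+v_e^2+u_o^2)^{q/2}$. The Stein--Weiss algebra is a Cauchy--Schwarz argument that remains valid for non-integer $N$. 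It gives $\Delta_\lambda\Phi_q\ge0$ for $q\ge (N-2)/(N-1)=p_0$.

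Taking $p_0\le q<1$, the same majorization settles your F.\ and M.\ Riesz step at $p=1$. It puts $\sup_r|f(re^{i\theta})|$ in $L^1_\lambda$, while the a.e.\ limits there come from $f=P[\mu]$.

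Finally, your displayed kernel bound has the wrong power. With $|1-ze^{-i\varphi}|^2$ in the denominator, its integral against $dm_\lambda$ is of order $(1-|z|)^{-1}$, so no maximal-function bound follows. The correct form is $P(z,e^{i\varphi})\lesssim \frac{1-|z|^2}{|1-ze^{-i\varphi}|}\,m_\lambda(I)^{-1}$. To this you must add the analogous term centred at the reflected point $-\theta$, which absorbs a logarithmic singularity of $P_0$ there.
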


By \cite[Theorem 5.5]{LW1}, the $\lambda$-Hardy space $H^p_{\lambda}(\DD)$ for $p_0\le p\le\infty$ is complete, and by \cite[Theorem 5.2]{LW1}, the set of $\lambda$-analytic polynomials is dense in $H_{\lambda}^p(\DD)$ for $p_0<p<\infty$. In particular, the set $\{\phi_n(z): \,n\in\NN_0\}$ is an orthonormal basis of $H_{\lambda}^2(\DD)$. If $1<p<\infty$, \cite[Theorem 5.10]{LW1} asserted that the dual of $H^p_{\lambda}(\DD)$ is isomorphic to $H^{p'}_{\lambda}(\DD)$ with equivalent norms, where $1/p+1/p'=1$.

%Most materials of this section come from \cite{LL1}. Such a topic is motivated by C. Dunkl's work \cite{Dun3}, where he built up a framework associated with the dihedral group
%$G=D_k$ on the disk $\DD$. Our work here and also that in \cite{LL1} and \cite{LW1} focus on the special case with $G=D_1$ having
%the reflection $z\mapsto\overline{z}$ only, to find possibilities to develop a deep theory of associated function spaces. We note that C. Dunkl has a general theory named after him associated with reflection-invariance on the Euclidean spaces, see \cite{Dun1},\cite{Dun2} and \cite{Dun4} for example. For the Hardy spaces in the upper half plane $\RR_+^2$ associated to the Dunkl operators $D_{z}$ and $D_{\bar{z}}$, see \cite{LL2}.

The Bergman kernel on the $\lambda$-Bergman spaces $A^{2}_{\lambda}(\DD)$ is given by (cf. \cite[Section 3]{LW1})
\begin{align}\label{Bergman-k-1}
K_{\lambda}(z,w)=\sum_{n=0}^{\infty}
\frac{n+\lambda+1}{\lambda+1}\phi_{n}(z)\overline{\phi_{n}(w)}, \qquad |zw|<1,
\end{align}
and is called the $\lambda$-Bergman kernel. For $f\in L_{\lambda}^{1}(\DD)$, we define the $\lambda$-Bergman projection $P_{\lambda}$ by
\begin{align}\label{Bergman-projection-1}
(P_{\lambda}f)(z)=\int_{\DD}f(w)K_{\lambda}(z,w)d\sigma_{\lambda}(w),\qquad z\in\DD.
\end{align}
By \cite[Theorem 3.6]{LW1}, the operator $P_{\lambda}$ is bounded from $L_{\lambda}^{p}(\DD)$ onto $A^{p}_{\lambda}(\DD)$ for
$1<p<\infty$, and by \cite[Proposition 3.1]{LW1}, all $f\in A^{1}_{\lambda}(\DD)$ satisfy the reproducing formula $f=P_{\lambda}f$, i.\,e.,
\begin{eqnarray}\label{reproducing-Bergman-1}
f(z)=\int_{\DD}f(w)K_{\lambda}(z,w)d\sigma_{\lambda}(w),\qquad z\in\DD.
\end{eqnarray}
%Conversely, if $f\in L_{\lambda}^{1}(\DD)$ satisfies (\ref{reproducing-Bergman-1}), then $f$ is $\lambda$-analytic in $\DD$, and in particular $f\in A^{1}_{\lambda}(\DD)$.

For $f\in A_{\lambda}^p(\DD)$ with $p\ge p_0$, its point evaluation is given by (cf. \cite[(41)]{LW1})
\begin{align*}
|f(z)|\lesssim\frac{(1-|z|)^{-2/p}}{|1-z^2|^{2\lambda/p}}\|f\|_{A_{\lambda}^{p}},\qquad z\in\DD.
\end{align*}
By \cite[Theorem 5.6]{LW1}, the $\lambda$-Bergman spaces $A^{p}_{\lambda}(\DD)$ for $p_0\le p\le\infty$ is complete,
and by \cite[Theorem 5.3]{LW1}, the set of $\lambda$-analytic polynomials is dense in $A^{p}_{\lambda}(\DD)$ for $p_0<p<\infty$.
In particular, the set $\{a_n\phi_{n}^{\lambda}(z)\}_{n=0}^{\infty}$ forms an orthonormal basis of $A^{2}_{\lambda}(\DD)$, where $a_n=\sqrt{(n+\lambda+1)/(\lambda+1)}$ for $n\in\NN_0$. If $1<p<\infty$, \cite[Theorem 5.11]{LW1} showed that the dual of $A^p_{\lambda}(\DD)$ is isomorphic to $A^{p'}_{\lambda}(\DD)$ in the sense that, each $L\in A^p_{\lambda}(\DD)^*$ can be represented by
\begin{align*}
L(f)=\int_{\DD}f(z)\overline{g(z)}d\sigma_{\lambda}(z),\qquad f\in A^p_{\lambda}(\DD),
\end{align*}
with a unique function $g\in A^{p'}_{\lambda}(\DD)$ satisfying $C_p\|g\|_{A_{\lambda}^{p'}}\le\|L\|\le\|g\|_{A_{\lambda}^{p'}}$,
where the constant $C_p$ is independent of $g$.

%\begin{proposition}\label{derivative-Bergman-a}{\rm (\cite[Lemma 6.6]{LW1})}
%For $1\le p\le\infty$, the mapping $f(z)\mapsto(1-|z|^{2})D_{z}\left(zf(z)\right)$ is bounded from $A^{p}_{\lambda}(\DD)$ into $L^{p}_{\lambda}(\DD)$.
%\end{proposition}
%
%Furthermore we have
%
%\begin{theorem}\label{derivative-Bergman-c} {\rm (\cite[Theorem 6.8]{LW1})}
%Suppose that $1\le p<\infty$ and $f$ is $\lambda$-analytic in $\DD$. Then $f\in A^{p}_{\lambda}(\DD)$ if and only if $(1-|z|^{2})D_{z}\left(zf(z)\right)\in L^{p}_{\lambda}(\DD)$. Moreover
%$$
%\|f\|_{A_{\lambda}^{p}}\asymp\|(1-|z|^2)D_{z}\left(zf(z)\right)\|_{L_{\lambda}^{p}(\DD)}.
%$$
%\end{theorem}

For the Hardy spaces $H^p$ and the Bergman spaces on the upper half-plane $\RR_+^2$ associated to the $\lambda$-analytic functions, see \cite{LL2,QWL,WLL}, and for the Hardy space $H^1$ in the general Dunkl setting, see \cite{JL1}.

\section{Fundamental properties of the $\lambda$-Bloch space}

In this section we shall prove several properties of the $\lambda$-Bloch space ${\mathfrak{B}}_{\lambda}(\DD)$ defined in Definition \ref{Bloch-definition-a}.

\begin{lemma}\label{analytic-converge-a}{\rm (\cite[Lemma 5.4]{LW1})}
Let $\{f_n\}$ be a sequence of $\lambda$-analytic functions on $\DD$. If $\{f_n\}$ converges uniformly on each compact subset of $\DD$, then its limit function is also $\lambda$-analytic in $\DD$.
\end{lemma}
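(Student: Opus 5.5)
The plan is to reduce the claim to the uniform convergence of the series representation and then pass the Cauchy--Riemann type system of Proposition \ref{anal-thm}(ii) to the limit in a weak sense. By Proposition \ref{anal-thm}, each $f_n$ is $C^2$ and has the representation $f_n=\sum_k c_k^{(n)}\phi_k$. The first step is to extract the coefficients of $f_n$ by integration on a circle $|z|=r<1$. For this we use the orthonormal system on $\partial\DD$ recalled in Section 2, together with (\ref{basis-1}). The $\phi_k(re^{i\theta})=r^k\phi_k(e^{i\theta})$ are mutually orthogonal in $L^2_\lambda(\partial\DD)$, so
$$c_k^{(n)}r^k=\int_{-\pi}^{\pi}f_n(re^{i\theta})\overline{\phi_k(e^{i\theta})}\,dm_\lambda(\theta).$$
Let $f$ be the uniform limit on compact sets. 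The coefficients $c_k^{(n)}$ then converge to some $c_k$, where
$$c_kr^k=\int_{-\pi}^{\pi}f(re^{i\theta})\overline{\phi_k(e^{i\theta})}\,dm_\lambda(\theta),$$
and moreover the integrals of $f$ against $\overline{e^{-i\theta}\phi_{k-1}(e^{-i\theta})}$ vanish on every circle, because they vanish for each $f_n$.

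The second step is a coefficient bound. Fix $r<\rho<1$. Since $\sup_n\sup_{|z|=\rho}|f_n|\le M$, the formula above gives $|c_k^{(n)}|\le M\rho^{-k}\|\phi_k\|_{L^1_\lambda(\partial\DD)}\lesssim M\rho^{-k}$, and the same bound holds for $c_k$. Combined with (\ref{phi-bound-1}), this shows that the series $g=\sum_k c_k\phi_k$ converges absolutely and uniformly on $|z|\le r$, together with all its term-by-term derivatives. Indeed, the derivatives of $\phi_k$ are polynomially bounded in $k$ times $|z|^{k-2}$, since $\phi_k$ is a homogeneous polynomial of degree $k$ with coefficients controlled as in \cite{LL1}. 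Consequently $g$ is $C^\infty$ and satisfies $D_{\bar z}g=0$ termwise. In other words, $g$ is $\lambda$-analytic by Proposition \ref{anal-basis}.

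The third step is to show $f=g$. On each circle $|z|=r$, the restriction $f(re^{i\cdot})$ is continuous. Its coefficients against the full orthonormal basis of $L^2_\lambda(\partial\DD)$ coincide with those of $g(re^{i\cdot})$: they are $c_kr^k$ for the $\phi_k$ and zero for the conjugate family. By completeness of the basis, $f=g$ $m_\lambda$-a.e. on the circle, and hence everywhere by continuity. Therefore $f=g$ is $\lambda$-analytic.

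The main obstacle is the estimate on the derivatives of $\phi_k$ needed to justify termwise differentiation, which is what makes $g$ of class $C^2$. I would handle it by bounding the coefficients of $\phi_k$ in its explicit monomial form by $\epsilon_k\binom{\lambda+k}{k}$-type quantities. The crude bound $\sum_j(\lambda)_j(\lambda+1)_{k-j}/(j!(k-j)!)\lesssim(k+1)^{2\lambda+1}$ makes every partial derivative of order at most two bounded by a polynomial in $k$ times $|z|^{k-2}$. Against the geometric decay $r^k\rho^{-k}$, this suffices. Alternatively, one can avoid the issue by invoking Proposition \ref{anal-thm}(iii) directly, which is stated as an equivalence: a series with $\sum k^\gamma|c_k|r^k$ locally uniformly convergent already defines a $\lambda$-analytic function.
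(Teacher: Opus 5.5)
Your proof is correct. The paper gives no argument of its own for this lemma; it simply imports it from \cite[Lemma 5.4]{LW1}. What you have is therefore a self-contained proof that uses only the facts recalled in Section 2.

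Your route works entirely at the level of coefficients. Orthogonality of $\{\phi_k(e^{i\theta})\}$ on each circle, together with the homogeneity $\phi_k(re^{i\theta})=r^k\phi_k(e^{i\theta})$, gives $c_k^{(n)}\to c_k$ and the uniform bound $|c_k^{(n)}|,|c_k|\le M_\rho\rho^{-k}$. The explicit monomial form of $\phi_k$ then does the rest. Its coefficients are positive and sum to exactly $\epsilon_k(2\lambda+1)_k/k!=\epsilon_k^{-1}$, which is the source of (\ref{phi-bound-1}). Termwise differentiation therefore makes $g=\sum_k c_k\phi_k$ a $C^\infty$ function with $D_{\bar z}g=0$, and $f=g$ follows.

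An equivalent and more compact packaging is the $\lambda$-Cauchy formula on circles: $f_n(z)=\int_{-\pi}^{\pi}f_n(\rho e^{i\varphi})C(z/\rho,e^{i\varphi})\,dm_\lambda(\varphi)$ for $|z|<\rho$. One passes to the limit under the integral and then differentiates under it. Since $C(z,w)=\sum_k\phi_k(z)\overline{\phi_k(w)}$, this is exactly your coefficient extraction followed by resummation. Your version has the merit of making the regularity estimate explicit.

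A few small corrections.

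\begin{enumerate}
\item Your closing ``alternative'' does not avoid the derivative estimate. Proposition \ref{anal-thm} is stated for $C^2$ functions, so (iii)$\Rightarrow$(i) can be applied to $g$ only once you know $g\in C^2$. That is precisely what the termwise-differentiation bound supplies.
\item Your third step can be shortened. Since $|c_k^{(n)}-c_k|\le 2M_\rho\rho^{-k}$ and each term tends to $0$, dominated convergence gives $\sup_{|z|\le r}|f_n(z)-g(z)|\le\sum_k|c_k^{(n)}-c_k|\,\epsilon_k^{-1}r^k\to0$. So completeness of the boundary basis is not needed.
\item Your opening sentence about passing the Cauchy--Riemann system to the limit ``in a weak sense'' describes a plan you never actually use.
\item On the real axis, the difference-quotient term of $D_{\bar z}g$ should be read as its continuous extension. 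It vanishes there because $g\in C^1$ and $D_{\bar z}g=0$ off the axis.
\end{enumerate}
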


\begin{proposition}\label{Bloch-a}
The $\lambda$-Bloch space ${\mathfrak{B}}_{\lambda}(\DD)$ is a Banach space with the norm $\|\cdot\|_{{\mathfrak{B}}_{\lambda}}$ given in (\ref{Bloch-1}).
\end{proposition}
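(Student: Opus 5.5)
We need to show two things: that $\|\cdot\|_{{\mathfrak{B}}_{\lambda}}$ is a genuine norm (not just a seminorm), and that ${\mathfrak{B}}_{\lambda}$ is complete. The plan rests on the series representation (\ref{anal-series-1-1}) together with the identity (\ref{Tzphi-2}).

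\emph{Step 1: the coefficient formula.} For $f=\sum_{n\ge0}c_n\phi_n$ $\lambda$-analytic, applying (\ref{Tzphi-2}) termwise gives
$$
D_z(zf(z))=\sum_{n\ge0}(n+\lambda+1)c_n\phi_n(z).
$$
Termwise differentiation is legitimate because, by Proposition \ref{anal-thm}(iii) and (\ref{phi-bound-1}), the series converges locally uniformly in $C^1$. Hence $g:=D_z(zf)$ is itself $\lambda$-analytic.

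\emph{Step 2: definiteness of the norm.} Homogeneity and the triangle inequality are clear from the definition. If $\|f\|_{{\mathfrak{B}}_{\lambda}}=0$, then $g\equiv0$. By the uniqueness of coefficients (orthonormality of $\{\phi_n\}$ in $L^2_\lambda(\partial\DD)$ applied on circles $|z|=r$), we get $(n+\lambda+1)c_n=0$ for all $n$. Since $\lambda>0$, this forces $c_n=0$, so $f=0$.

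\emph{Step 3: recovering $c_n$ and a pointwise bound.} More quantitatively, the coefficients of $g$ are
$$
(n+\lambda+1)c_n r^n=\int_{-\pi}^{\pi}g(re^{i\theta})\overline{\phi_n(e^{i\theta})}\,dm_\lambda(\theta).
$$
We have $|g(re^{i\theta})|\le\|f\|_{{\mathfrak{B}}_{\lambda}}/(1-r^2)$, and $\|\phi_n\|_{L^1(dm_\lambda)}\le1$ since these functions are orthonormal. Therefore
$$
|c_n|\le\frac{\|f\|_{{\mathfrak{B}}_{\lambda}}}{(n+\lambda+1)r^n(1-r^2)}.
$$
Choosing $r$ fixed (say $r=1/2$ would lose $2^n$, so instead take $r^2=n/(n+1)$) gives $|c_n|\lesssim\|f\|_{{\mathfrak{B}}_{\lambda}}$ uniformly in $n$. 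Combined with (\ref{phi-bound-1}), this yields
$$
|f(z)|\lesssim\|f\|_{{\mathfrak{B}}_{\lambda}}\sum_{n\ge0}(n+1)^\lambda|z|^n,
$$
so for every compact $K\subset\DD$,
$$
\sup_{K}|f|\le C_K\|f\|_{{\mathfrak{B}}_{\lambda}}.
$$
This local estimate is the crux of the argument. The main obstacle is justifying the coefficient extraction on circles, which is done by the orthonormality of $\{\phi_n(e^{i\theta})\}$ in $L^2_\lambda(\partial\DD)$ applied to $g(r\cdot)$.

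\emph{Step 4: completeness.} Let $\{f_k\}$ be Cauchy in ${\mathfrak{B}}_{\lambda}$. By Step 3, $\{f_k\}$ is uniformly Cauchy on compacta, so it converges locally uniformly to some $f$, which is $\lambda$-analytic by Lemma \ref{analytic-converge-a}. The coefficient bounds show that $c_n^{(k)}\to c_n$ with $\sup_{n,k}|c_n^{(k)}|<\infty$. By dominated convergence in the series of Step 1, $D_z(zf_k)\to D_z(zf)$ pointwise.

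Finally, fix $\varepsilon>0$ and choose $N$ with $\|f_k-f_m\|_{{\mathfrak{B}}_{\lambda}}<\varepsilon$ for $k,m\ge N$. For each $z\in\DD$ and $k\ge N$,
$$
(1-|z|^2)\,|D_z(z(f_k-f_m))(z)|<\varepsilon.
$$
Letting $m\to\infty$ gives $(1-|z|^2)|D_z(z(f_k-f))(z)|\le\varepsilon$ for all $z$. Hence $f\in{\mathfrak{B}}_{\lambda}$ and $\|f_k-f\|_{{\mathfrak{B}}_{\lambda}}\le\varepsilon$ for $k\ge N$, which proves completeness.
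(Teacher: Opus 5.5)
Your proof is correct. The definiteness step matches the paper: $(n+\lambda+1)c_n=0$ via (\ref{Tzphi-2}) and uniqueness of coefficients. Your completeness argument, however, takes a different route.

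The paper never estimates the functions $f_k$ themselves. From $|D_z(zf_m)-D_z(zf_n)|\le(1-r^2)^{-1}\|f_m-f_n\|_{{\mathfrak{B}}_{\lambda}}$ on $\overline{\DD}_r$, it gets that the functions $g_n=D_z(zf_n)$ converge locally uniformly to a $\lambda$-analytic limit $g=\sum b_k\phi_k$ (Lemma \ref{analytic-converge-a}). It then builds the limit by inverting $D_z\circ z$ on coefficients, $f=\sum_k b_k(k+\lambda+1)^{-1}\phi_k$. This makes $D_z(zf)=g$ true by construction, so the final $\liminf$ step is immediate.

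You instead work with $f$ directly:
\begin{itemize}
\item You prove the coefficient bound $|c_n|\lesssim\|f\|_{{\mathfrak{B}}_{\lambda}}$. Your choice $r^2=n/(n+1)$ works, and $\|\phi_n\|_{L^1(dm_\lambda)}\le1$ does follow from orthonormality, because $\phi_0\equiv1$ forces $m_\lambda(\partial\DD)=1$.
\item From this you deduce $\sup_K|f|\le C_K\|f\|_{{\mathfrak{B}}_{\lambda}}$ on compact sets and obtain the limit $f$ of the $f_k$ directly.
\item You then need an extra dominated-convergence argument on the coefficients to get $D_z(zf_k)\to D_z(zf)$ pointwise.
\end{itemize}

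The paper's route is shorter and uses nothing beyond the definition of the norm. Yours costs one more estimate, but it yields two by-products. Point evaluations are continuous on ${\mathfrak{B}}_{\lambda}$, and norm convergence implies locally uniform convergence of the functions themselves. This is a crude version of the growth estimate the paper proves later in Section 3 via the kernel $\widetilde{K}_{\lambda}$: your bound is of order $(1-|z|)^{-\lambda-1}$ rather than logarithmic.

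One small point in Step 1: (\ref{phi-bound-1}) bounds $|\phi_n|$ but not its derivatives. To justify applying $D_z\circ z$ termwise, you need a polynomial-in-$n$ bound such as $|\partial_z\phi_n(z)|\le n\epsilon_n^{-1}|z|^{n-1}$. This follows from the positivity of the coefficients of $\phi_n$ together with $\phi_n(1)=\epsilon_n^{-1}$. The paper also uses this termwise identity without comment.
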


\begin{proof}
We note that $\|\cdot\|_{{\mathfrak{B}}_{\lambda}}$ is a norm. It suffices to verify that $\|f\|_{{\mathfrak{B}}_{\lambda}}=0$ implies $f\equiv0$.
Indeed, if $f(z)=\sum_{k=0}^{\infty}c_{k}\phi_{k}(z)$, it follows from (\ref{Tzphi-2}) that
$$
\sum_{k=0}^{\infty}(k+\lambda+1)c_k\phi_{k}^{\lambda}(z)=D_{z}\left(zf(z)\right)\equiv0,
$$
which certainly asserts that all $c_k=0$ for $k\in\NN_0$. Therefore $f\equiv0$.

Suppose $\{f_n\}_{n=1}^{\infty}$ is a Cauchy sequence in ${\mathfrak{B}}_{\lambda}(\DD)$ and $\DD_r=\{z\in\CC:\,|z|<r\}$ for $0<r<1$. Since
\begin{eqnarray*}
|D_{z}\left(zf_m(z)\right)-D_{z}\left(zf_n(z)\right)|\le(1-r^2)^{-1}\|f_m-f_n\|_{{\mathfrak{B}}_{\lambda}} \quad \hbox{for}\,\,z\in\overline{\DD}_r,
\end{eqnarray*}
it follows that
$\{D_{z}\left(zf_n(z)\right)\}_{n=1}^{\infty}$
converges to a function $g$ uniformly on each compact subset of $\DD$. By Lemma \ref{analytic-converge-a}, $g$ is $\lambda$-analytic in $\DD$. Assume $g(z)=\sum_{k=0}^{\infty}b_{k}\phi_{k}(z)$ and define
$$
f(z)=\sum_{k=0}^{\infty}\frac{b_{k}}{k+\lambda+1}\phi_{k}(z), \qquad z\in\DD.
$$
It then follows from (\ref{Tzphi-2}) that $D_{z}\left(zf(z)\right)=g(z)$. Thus, for $z\in\DD$ we have
\begin{align*}
&(1-|z|^2)|D_{z}\left(zf_n(z)\right)-D_{z}\left(zf(z)\right)|\\
&\qquad =\lim_{m\rightarrow\infty}(1-|z|^2)|D_{z}\left(zf_n(z)\right)-D_{z}\left(zf_m(z)\right)|\\
&\qquad \le\liminf_{m\rightarrow\infty}\|f_n-f_m\|_{{\mathfrak{B}}_{\lambda}},
\end{align*}
so that $\|f_n-f\|_{{\mathfrak{B}}_{\lambda}}\le\liminf\limits_{m\rightarrow\infty}\|f_n-f_m\|_{{\mathfrak{B}}_{\lambda}}$. Therefore $\lim\limits_{n\rightarrow\infty}\|f_n-f\|_{{\mathfrak{B}}_{\lambda}}=0$, and the completeness of the space ${\mathfrak{B}}_{\lambda}(\DD)$ is proved.
\end{proof}

\begin{proposition}\label{Bloch-b}
We have $A^{\infty}_{\lambda}(\DD)\subseteq{\mathfrak{B}}_{\lambda}$, and $\|f\|_{{\mathfrak{B}}_{\lambda}}\lesssim\|f\|_{A_{\lambda}^{\infty}}$ for $f\in A^{\infty}_{\lambda}(\DD)$.
\end{proposition}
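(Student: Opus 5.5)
The quickest route is to note that the statement is exactly the case $p=\infty$ of \cite[Lemma 6.6]{LW1}, quoted in the introduction: the map $f\mapsto(1-|z|^2)D_z(zf(z))$ is bounded from $A^\infty_\lambda(\DD)$ into $L^\infty_\lambda(\DD)$. By Definition \ref{Bloch-definition-a} this says precisely that $\|f\|_{{\mathfrak{B}}_{\lambda}}\lesssim\|f\|_{A^\infty_\lambda}$, and in particular $A^\infty_\lambda(\DD)\subseteq{\mathfrak{B}}_\lambda$. I would, however, also sketch a self-contained argument, to make clear where the estimate comes from.

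For the direct argument, since $H^\infty_\lambda(\DD)=A^\infty_\lambda(\DD)$, I would represent $f$ through its boundary values. By Theorem \ref{Hardy-boundary-value-a}, $f(e^{i\theta})$ exists a.e. with $|f(e^{i\theta})|\le\|f\|_{A^\infty_\lambda}$. Applying the Cauchy kernel (\ref{Cauchy-kernel-2-1}) first to the dilates $f_r(z)=f(rz)$, which are uniformly convergent series in $\phi_n$ on $\overline{\DD}$, and then letting $r\to1$ gives
\begin{align*}
f(z)=\int_{-\pi}^{\pi}f(e^{i\theta})\,C(z,e^{i\theta})\,dm_\lambda(\theta).
\end{align*}
Instead of the dilation limit, one could equally use the Bergman reproducing formula (\ref{reproducing-Bergman-1}) with the kernel $K_\lambda$, since $f\in A^1_\lambda$.

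Next, by (\ref{Tzphi-2}), termwise application of $D_z\circ z$ is legitimate because of the absolute convergence in Proposition \ref{anal-thm}(iii). This yields
\begin{align*}
D_z\big(zf(z)\big)=\int_{-\pi}^{\pi}f(e^{i\theta})\,\widetilde C(z,e^{i\theta})\,dm_\lambda(\theta),\qquad \widetilde C(z,w)=\sum_{n=0}^{\infty}(n+\lambda+1)\phi_n(z)\overline{\phi_n(w)}.
\end{align*}
The claim therefore reduces to the kernel estimate
\begin{align*}
\int_{-\pi}^{\pi}|\widetilde C(z,e^{i\theta})|\,dm_\lambda(\theta)\lesssim(1-|z|)^{-1}.
\end{align*}

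This kernel estimate is the main obstacle. I would write $\widetilde C(z,w)=D_z\big(zC(z,w)\big)$ and use the closed form $C(z,w)=(1-z\bar w)^{-1}P_0(z,w)$, together with the hypergeometric expression of $P_0$. Differentiating produces terms bounded by
\begin{align*}
\frac{1}{|1-z\bar w|^{2}\,|1-zw|^{2\lambda}}
\end{align*}
times bounded hypergeometric factors. One then integrates against $|\sin\theta|^{2\lambda}d\theta$, using $|1-zw|\ge|{\rm Im}\,w|\,$-type lower bounds, or equivalently $|1-zw|\gtrsim|\sin\theta|$ near the real axis. This should reduce the integral to $\int|1-z e^{-i\theta}|^{-2}d\theta\asymp(1-|z|)^{-1}$; the weight $|\sin\theta|^{2\lambda}$ is designed to absorb the singular factor $|1-zw|^{-2\lambda}$. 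Since this is exactly the computation carried out in \cite{LW1} for Lemma 6.6, in the write-up I would invoke that lemma rather than redo the hypergeometric bookkeeping.
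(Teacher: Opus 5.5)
Your proof is the paper's: the proposition is obtained there simply by quoting the case $p=\infty$ of \cite[Lemma 6.6]{LW1}, exactly as in your first paragraph. If you keep the optional self-contained sketch, note that the lower bound $|1-zw|\gtrsim|\sin\theta|$ is false (for $w=i$ and $z=-ir$ one has $|1-zw|=1-r$ while $\sin\theta=1$), so the weight cannot absorb $|1-zw|^{-2\lambda}$ on its own. Instead use Lemma \ref{h-kernal-a}(i) with $\beta=1$ (extended to $|w|=1$ by continuity), whose factor $(|1-z\overline{w}|+|1-zw|)^{-2\lambda}$ is absorbed by $|\sin\theta|^{2\lambda}$ because $|1-z\overline{w}|+|1-zw|\ge 2|z|\,|\mathrm{Im}\,w|$ (cf.\ (\ref{inequality-2})); Cauchy--Schwarz then leaves $\int_{-\pi}^{\pi}|1-ze^{-i\theta}|^{-2}\,d\theta\asymp(1-|z|)^{-1}$.
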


Indeed, by \cite[Lemma 6.6]{LW1}, one has
\begin{align*}
\sup_{z\in\DD}(1-|z|^2)|D_{z}\left(zf(z)\right)|\lesssim\|f\|_{A_{\lambda}^{\infty}}, \qquad f\in A^{\infty}_{\lambda}(\DD).
\end{align*}
Thus the proposition is concluded.

For $\beta\in(-\infty,\infty)$, define the function $h_{\lambda,\beta}(z,w)$ by
\begin{align}\label{h-kernel-1}
h_{\lambda,\beta}(z,w)=\sum_{n=0}^{\infty} a_{\beta}(n)\phi_{n}(z)\overline{\phi_{n}(w)}, \qquad |zw|<1,
\end{align}
where $a_{\beta}(n)$ satisfies
\begin{align}\label{h-kernel-2}
a_{\beta}(n)=\sum_{j=0}^{M}a_{\beta,j}(n+1)^{\beta-j}+O\left((n+1)^{\beta-M-1}\right)
\end{align}
for $n\ge0$ and $M=\max\{[\beta+2\lambda+1],0\}$.

%The following lemma is a consequence of \cite[Corollary 7.3]{LW2}.

%\begin{lemma}\label{h-kernal-a}
%Let the function $h_{\lambda,\beta}(z,w)$ be defined by (\ref{h-kernel-1}) and (\ref{h-kernel-2}). Then for $\beta>0$,
%\begin{align*}
%|h_{\lambda,\beta}(z,w)|\lesssim \frac{(|1-z\overline{w}|+|1-zw|)^{-2\lambda}}{|1-z\overline{w}|}
%\left(\frac{1}{|1-z\overline{w}|^{\beta}}+\frac{1}{|1-zw|^{\beta}}\right),\quad z,w\in\DD.
%\end{align*}
%\end{lemma}

The following lemma is a consequence of \cite[Corollary 7.3]{LW2}, and will be often used subsequently.

\begin{lemma}\label{h-kernal-a}
Let the function $h_{\lambda,\beta}(z,w)$ be defined by (\ref{h-kernel-1}) and (\ref{h-kernel-2}). Then

{\rm (i)} for $\beta>0$,
\begin{align*}
|h_{\lambda,\beta}(z,w)|\lesssim \frac{(|1-z\overline{w}|+|1-zw|)^{-2\lambda}}{|1-z\overline{w}|}
\left(\frac{1}{|1-z\overline{w}|^{\beta}}+\frac{1}{|1-zw|^{\beta}}\right),\quad z,w\in\DD;
\end{align*}

%{\rm (ii)} for $\beta=0$,
%\begin{align*}
%|h_{\lambda,0}(z,w)|\lesssim \frac{(|1-z\overline{w}|+|1-zw|)^{-2\lambda}}{|1-z\overline{w}|}
%\ln\left(\frac{|1-z\overline{w}|}{|1-zw|}+2\right),\quad z,w\in\DD;
%\end{align*}

{\rm (ii)} for $-1<\beta<0$,
\begin{align*}
|h_{\lambda,\beta}(z,w)|\lesssim \frac{(|1-z\overline{w}|+|1-zw|)^{-2\lambda}}{|1-z\overline{w}|^{\beta+1}},\quad z,w\in\DD;
\end{align*}

{\rm (iii)} for $\beta=-1$,
\begin{align*}
|h_{\lambda,-1}(z,w)|\lesssim (|1-z\overline{w}|+|1-zw|)^{-2\lambda}\ln\left(\frac{|1-zw|}{|1-z\overline{w}|}+2\right),\quad z,w\in\DD;
\end{align*}

{\rm (iv)} for $-2\lambda-1<\beta<-1$,
\begin{align*}
|h_{\lambda,\beta}(z,w)|\lesssim (|1-z\overline{w}|+|1-zw|)^{-\beta-2\lambda-1},\quad z,w\in\DD.
\end{align*}

%{\rm (vi)} for $\beta=-2\lambda-1$,
%\begin{align*}
%|h_{\lambda,\beta}(z,w)|\lesssim \ln\left(\frac{1}{|1-z\overline{w}|+|1-zw|}+2\right),\quad z,w\in\DD;
%\end{align*}
%
%{\rm (vii)} for $\beta<-2\lambda-1$, $h_{\lambda,\beta}(z,w)$ is continuous for $z,w$ satisfying $|zw|\le1$.
\end{lemma}

\begin{lemma}\label{representation-a}{\rm (\cite[Proposition 6.9]{LW1})}
If $f$ is $\lambda$-analytic in $\DD$ and satisfies the condition $(1-|z|^{2})D_{z}\left(zf(z)\right)\in L^{1}_{\lambda}(\DD)$, then
\begin{align}\label{representation-1}
f(z)=\int_{\DD}D_{w}\left(wf(w)\right)\widetilde{K}_{\lambda}(z,w)(1-|w|^{2})\,d\sigma_{\lambda}(w),\qquad z\in\DD,
\end{align}
where
\begin{align}\label{kernel-1}
\widetilde{K}_{\lambda}(z,w)=\sum_{n=0}^{\infty}
\frac{n+\lambda+2}{\lambda+1}\phi_{n}(z)\overline{\phi_{n}(w)}.
\end{align}
\end{lemma}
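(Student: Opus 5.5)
This is a series-coefficient verification, and I would prove it by matching coefficients. By Proposition \ref{anal-thm}(iii), write $f(z)=\sum_{k\ge0}c_k\phi_k(z)$, with the series converging absolutely and locally uniformly in $\DD$. By (\ref{Tzphi-2}),
$$
D_w(wf(w))=\sum_{k\ge0}(k+\lambda+1)c_k\phi_k(w).
$$
This series also converges locally uniformly, and its growth is controlled by the bound (\ref{phi-bound-1}).

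The heart of the argument is to insert this expansion and the series (\ref{kernel-1}) into the right-hand side of (\ref{representation-1}) and compute termwise. Polar coordinates $w=\rho e^{i\theta}$ give $d\sigma_\lambda=(2\lambda+2)\rho^{2\lambda+1}d\rho\,dm_\lambda(\theta)$. Since the $\phi_n$ are homogeneous of degree $n$ and orthonormal in $L^2_\lambda(\partial\DD)$, we get
$$
\int_\DD\phi_k(w)\overline{\phi_n(w)}(1-|w|^2)\,d\sigma_\lambda(w)
=\delta_{kn}(2\lambda+2)\int_0^1\rho^{2n+2\lambda+1}(1-\rho^2)\,d\rho
=\delta_{kn}\frac{\lambda+1}{(n+\lambda+1)(n+\lambda+2)}.
$$
Multiplying by $(n+\lambda+1)c_n\cdot\frac{n+\lambda+2}{\lambda+1}$ gives exactly $c_n$. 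The right-hand side therefore equals $\sum_n c_n\phi_n(z)=f(z)$.

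The main obstacle is justifying the interchange of the integral over $\DD$ with the two infinite sums, since the hypothesis is only that $(1-|w|^2)D_w(wf)\in L^1_\lambda$. Fix $z$ and split the kernel sum. For fixed $z$, the kernel series converges uniformly in $w\in\DD$, because $|\phi_n(z)\phi_n(w)|\lesssim(n+1)^{2\lambda}|z|^n$ by (\ref{phi-bound-1}). So $\widetilde K_\lambda(z,\cdot)$ is bounded on $\DD$, and the integral equals $\sum_n\frac{n+\lambda+2}{\lambda+1}\phi_n(z)\int_\DD D_w(wf)\overline{\phi_n(w)}(1-|w|^2)\,d\sigma_\lambda$ by dominated convergence, using the $L^1$ hypothesis.

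For each single $n$, the remaining integral is computed as a limit over the disks $|w|<r$ with $r\to1$; dominated convergence applies again via the $L^1$ hypothesis. On $|w|\le r$ the expansion of $D_w(wf)$ converges uniformly, so termwise integration is legitimate there. Orthogonality then gives
$$
(n+\lambda+1)c_n(2\lambda+2)\int_0^r\rho^{2n+2\lambda+1}(1-\rho^2)\,d\rho,
$$
and letting $r\to1$ yields the formula above.

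Summing over $n$ recovers $f(z)$, which proves (\ref{representation-1}).
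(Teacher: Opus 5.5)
Your argument is correct and is the same coefficient-matching argument the paper uses for its generalization, Lemma~\ref{representation-b}. Take $\alpha=1$ there: then $\widetilde{K}_{\lambda,1}=\widetilde{K}_{\lambda}$, and (\ref{orthogonal-constant-1}) gives your constant $\frac{\lambda+1}{(n+\lambda+1)(n+\lambda+2)}$; for this particular case the paper only cites its reference. Your justification of the termwise integration (boundedness of $\widetilde{K}_{\lambda}(z,\cdot)$ on $\DD$ for fixed $z$, then exhaustion by the disks $|w|<r$ for each single coefficient) supplies the details the paper merely asserts.
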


The next proposition indicates a radial growth order of $f\in{\mathfrak{B}}_{\lambda}$ as $|z|\rightarrow1-$.

\begin{proposition}\label{Bloch-b}
If $f\in{\mathfrak{B}}_{\lambda}(\DD)$, then
$$
|f(z)|\lesssim\|f\|_{{\mathfrak{B}}_{\lambda}}\ln\frac{2}{1-|z|}, \qquad |z|<1.
$$
\end{proposition}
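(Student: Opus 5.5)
The plan is to use the representation formula of Lemma \ref{representation-a} together with the kernel estimate of Lemma \ref{h-kernal-a}. First check the hypothesis of Lemma \ref{representation-a}. For $f\in{\mathfrak{B}}_{\lambda}(\DD)$ we have $|(1-|w|^2)D_w(wf(w))|\le\|f\|_{{\mathfrak{B}}_{\lambda}}$, and $\sigma_\lambda$ is a probability measure, so $(1-|w|^2)D_w(wf(w))\in L^1_\lambda(\DD)$. Hence (\ref{representation-1}) holds, and
\begin{align*}
|f(z)|\le\|f\|_{{\mathfrak{B}}_{\lambda}}\int_{\DD}|\widetilde{K}_{\lambda}(z,w)|\,d\sigma_{\lambda}(w).
\end{align*}
The proposition therefore reduces to the kernel bound
\begin{align*}
\int_{\DD}|\widetilde{K}_{\lambda}(z,w)|\,d\sigma_\lambda(w)\lesssim\ln\frac{2}{1-|z|}.
\end{align*}

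To estimate $\widetilde{K}_\lambda$, observe that its coefficients $a(n)=(n+\lambda+2)/(\lambda+1)=\frac{1}{\lambda+1}(n+1)+\frac{\lambda+1}{\lambda+1}$ have the exact form (\ref{h-kernel-2}) with $\beta=1$. Lemma \ref{h-kernal-a}(i) then gives
\begin{align*}
|\widetilde K_\lambda(z,w)|\lesssim\frac{(|1-z\bar w|+|1-zw|)^{-2\lambda}}{|1-z\bar w|}\Big(\frac{1}{|1-z\bar w|}+\frac{1}{|1-zw|}\Big).
\end{align*}
Since the measure $d\sigma_\lambda$ is invariant under $w\mapsto\bar w$, and this swaps the roles of $|1-z\bar w|$ and $|1-zw|$ (the symmetric factor is unchanged), it suffices to bound
\begin{align*}
I(z)=\int_{\DD}\frac{(|1-z\bar w|+|1-zw|)^{-2\lambda}}{|1-z\bar w|^2}|{\rm Im}\,w|^{2\lambda}\,dA(w),
\end{align*}
together with the mixed term. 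The mixed term is handled by $\frac{1}{|1-z\bar w||1-zw|}\le\frac12\big(|1-z\bar w|^{-2}+|1-zw|^{-2}\big)$ and the same reflection.

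The main obstacle is the estimate $I(z)\lesssim\ln\frac{2}{1-|z|}$, and the key is to absorb the weight $|y|^{2\lambda}$. Writing $w=u+iv$ and $z=x+iy$, one has the identity $|1-zw|^2-|1-z\bar w|^2=4\,{\rm Im}z\,{\rm Im}w$. We would first try the geometric bound $|{\rm Im}\,w|\lesssim|1-z\bar w|+|1-zw|$. It should hold because, for $z$ near the boundary, $w$ near $\bar z$ or $z$ keeps both quantities comparable to the distance to $\partial\DD$ plus the imaginary parts. Failing a clean identity, it can be obtained from $|w-\bar w|\le|w-z^{*}|+|\bar w-z^*|$ with $z^*=1/\bar z$ and $|1-z\bar w|=|z||z^*-w|$, at least for $|z|\ge1/2$; the case $|z|<1/2$ is trivial since the kernel is then bounded. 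This bound gives $|{\rm Im}\,w|^{2\lambda}(|1-z\bar w|+|1-zw|)^{-2\lambda}\lesssim1$, so
\begin{align*}
I(z)\lesssim\int_{\DD}\frac{dA(w)}{|1-z\bar w|^{2}}=\frac{1}{|z|^2}\ln\frac{1}{1-|z|^2}\quad(\text{classical}),
\end{align*}
which is $\lesssim\ln\frac{2}{1-|z|}$ for $|z|\ge1/2$. Combining this with the trivial bound for $|z|<1/2$ completes the proof.
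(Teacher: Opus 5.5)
Your proof is correct and follows the paper's argument: the representation formula of Lemma \ref{representation-a}, the kernel bound of Lemma \ref{h-kernal-a}(i) with $\beta=1$, the reflection $w\mapsto\bar w$ for the second term, absorption of the weight $|{\rm Im}\,w|^{2\lambda}$ into the symmetric factor $(|1-z\bar w|+|1-zw|)^{-2\lambda}$, and reduction to $\int_{\DD}|1-z\bar w|^{-2}\,dA(w)$. The differences are only in presentation. The paper obtains the weight absorption from the polar-coordinate inequality (\ref{inequality-2}) and leaves the final integral as a ``direct calculation''. You make both steps explicit: the weight absorption via $2|z|\,|{\rm Im}\,w|=|(1-zw)-(1-z\bar w)|$ together with the trivial case $|z|<1/2$, and the final integral via the classical formula, which equals $\pi|z|^{-2}\ln\frac{1}{1-|z|^2}$ for unnormalized area measure.
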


\begin{proof}
From (\ref{representation-1}) we have
\begin{align*}
|f(z)|\le\|f\|_{{\mathfrak{B}}_{\lambda}}\int_{\DD}|\widetilde{K}_{\lambda}(z,w)|\,d\sigma_{\lambda}(w),\qquad z\in\DD.
\end{align*}
For $z=re^{i\theta}$, $w=se^{i\varphi}\in\DD$, it is not difficult to verify the following inequalities
\begin{align}
&|1-z\overline{w}|\asymp1-rs+\left|\sin(\theta-\varphi)/2\right|,\label{inequality-1}\\
&|1-z\overline{w}|+|1-zw|\gtrsim 1-rs+|\sin\theta|+|\sin\varphi|,\label{inequality-2}
\end{align}
and according to Lemma \ref{h-kernal-a} (i) with $\beta=1$,
\begin{align*}
|\widetilde{K}_{\lambda}(z,w)|\lesssim \Phi_{r,\theta}(s,\varphi)+\Phi_{r,\theta}(s,-\varphi),
\end{align*}
where
\begin{align*}
 \Phi_{r,\theta}(s,\varphi)
 =\frac{\left(1-rs+\left|\sin(\theta-\varphi)/2\right|\right)^{-2}}{\left(1-rs+|\sin\theta|+|\sin\varphi|\right)^{2\lambda}}.
\end{align*}
Since the contribution of $\Phi_{r,\theta}(s,-\varphi)$ to the integral is the same as that of $\Phi_{r,\theta}(s,\varphi)$, one has
\begin{align*}
|f(z)| &\lesssim\|f\|_{{\mathfrak{B}}_{\lambda}}
\int_0^1\int_{-\pi}^{\pi}\Phi_{r,\theta}(s,\varphi)|\sin\varphi|^{2\lambda}d\varphi ds\\
&\le\|f\|_{{\mathfrak{B}}_{\lambda}}
\int_0^1\int_{-\pi}^{\pi}\frac{d\varphi ds}{\left(1-rs+\left|\sin(\theta-\varphi)/2\right|\right)^{2}}.
\end{align*}
Direct calculations show that the last double integral is dominated by a multiple of $\ln\frac{2}{1-r}$ for $|z|=r<1$. This finishes the proof of Proposition \ref{Bloch-b}.
\end{proof}

We now give an example which shows that  $A^{\infty}_{\lambda}(\DD)$ is a proper subset of ${\mathfrak{B}}_{\lambda}(\DD)$. We shall need a lemma.

For $\alpha>-1$, define the function $F_{\alpha}(z)$ by
\begin{align}\label{F-function-1}
F_{\alpha}(z)=\sum_{n=1}^{\infty} c_{\alpha}(n)r^nP_{n}^{\lambda}(\cos\theta), \qquad z=re^{i\theta}\in\DD,
\end{align}
where $c_{\alpha}(n)$ satisfies
\begin{align}\label{F-function-2}
c_{\alpha}(n)=\sum_{j=0}^{M}c_{\alpha,j}n^{-\alpha-j}+O\left((n+1)^{-\alpha-M-1}\right)
\end{align}
for $n\ge1$ and $M=\max\{[2\lambda-\alpha],0\}$.

\begin{lemma}\label{F-function-a}
For $\alpha>-1$, let the function $F_{\alpha}(z)$ be defined by (\ref{F-function-1}) and (\ref{F-function-2}), and $F(e^{i\theta})=\lim_{r\rightarrow1^-}F(re^{i\theta})$ whenever the limit exists for given $\theta$. Then

{\rm (i)} $F_{\alpha}(z)$ is continuous on ${\overline{\DD}}\setminus\{1\}$, and $F(e^{i\theta})\in L_{\lambda}^{p}(\partial\DD)$;

{\rm (ii)} if $\alpha>2\lambda$, then $F\in C({\overline{\DD}})$;

{\rm (iii)} if $\alpha=2\lambda$, then $F(e^{i\theta})\asymp\ln|\theta|^{-1}$ as $\theta\rightarrow0$;

{\rm (iv)} if $-1<\alpha<2\lambda$, then $F(e^{i\theta})\asymp|\theta|^{\alpha-2\lambda}$ as $\theta\rightarrow0$.
\end{lemma}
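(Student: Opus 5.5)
We analyse the boundary behaviour of $F_\alpha$ through the Gegenbauer series $\sum_n c_\alpha(n)P_n^\lambda(\cos\theta)$. The main tool is the asymptotics of $P_n^\lambda$ together with a summation-by-parts reduction of the coefficients.

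\textbf{Step 1: reduction to model sequences.} By (\ref{F-function-2}) we write
\begin{align*}
c_\alpha(n)=\sum_{j=0}^{M}c_{\alpha,j}n^{-\alpha-j}+R(n),\qquad R(n)=O\big(n^{-\alpha-M-1}\big).
\end{align*}
Since $M\ge 2\lambda-\alpha-1$ (roughly), we have $\alpha+M+1>2\lambda$. Recall $|P_n^\lambda(\cos\theta)|\le P_n^\lambda(1)\asymp n^{2\lambda-1}$. Hence the remainder series converges absolutely and uniformly on $\overline{\DD}$, and it contributes a continuous function. The same argument applies to every term $n^{-\alpha-j}$ with $\alpha+j>2\lambda$. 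This already proves (ii), because $\alpha>2\lambda$ makes all terms of this kind. It remains to study the model functions
\begin{align*}
G_\gamma(re^{i\theta})=\sum_{n\ge1}n^{-\gamma}r^nP_n^\lambda(\cos\theta),\qquad -1<\gamma\le 2\lambda.
\end{align*}

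\textbf{Step 2: continuity away from $1$.} Away from $\theta=0$ I use Abel summation. Take the generating function $\sum_n P_n^\lambda(t)r^n=(1-2rt+r^2)^{-\lambda}$. Differentiating or integrating in $r$, or equivalently taking repeated differences of $n^{-\gamma}$, reduces the problem to partial sums of $P_n^\lambda(\cos\theta)$ weighted by Cesàro factors. On $|\theta|\ge\delta$ and away from $\theta=\pm\pi$, the classical estimate $|P_n^\lambda(\cos\theta)|\lesssim n^{\lambda-1}(\sin\theta)^{-\lambda}$ holds. Moreover, Cesàro $(C,k)$ means of order $k>\lambda$ of $\sum P_n^\lambda(\cos\theta)$ are bounded uniformly there. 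After $k$ summations by parts, the differences $\Delta^k n^{-\gamma}=O(n^{-\gamma-k})$ give absolute convergence. Near $\theta=\pi$ I use the reflection $P_n^\lambda(-t)=(-1)^nP_n^\lambda(t)$ and apply the same argument to the alternating series, which is harmless since $(-1)^n$ regularises the sum. This gives uniform convergence on compact subsets of $\overline{\DD}\setminus\{1\}$, and hence continuity.

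\textbf{Step 3: singularity at $\theta=0$.} I represent $n^{-\gamma}=\Gamma(\gamma)^{-1}\int_0^\infty t^{\gamma-1}e^{-nt}dt$ for $\gamma>0$. For $\gamma\le0$ I instead use the fractional form $n^{-\gamma}\approx (\lambda')_n/n!$ with a suitable $\lambda'$, together with the Gegenbauer linearisation/generating function. This yields
\begin{align*}
G_\gamma(e^{i\theta})\approx\frac{1}{\Gamma(\gamma)}\int_0^\infty t^{\gamma-1}\big[(1-2e^{-t}\cos\theta+e^{-2t})^{-\lambda}-1\big]\,dt.
\end{align*}
Here $1-2e^{-t}\cos\theta+e^{-2t}\asymp t^2+\theta^2$ for small $t,\theta$. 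The integral is therefore governed by $\int_0^1 t^{\gamma-1}(t+|\theta|)^{-2\lambda}dt$. This quantity is $\asymp|\theta|^{\gamma-2\lambda}$ when $\gamma<2\lambda$, and $\asymp\ln|\theta|^{-1}$ when $\gamma=2\lambda$; since the integrand is positive, we get both upper and lower bounds. Applying this with $\gamma=\alpha$ gives (iii) and (iv), provided $c_{\alpha,0}\neq0$, which I take as implicit. The lower-order terms with $\gamma=\alpha+j$ are $O(|\theta|^{\alpha+j-2\lambda})$ or logarithmic, so they are of smaller order.

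Finally, the $L^p_\lambda(\partial\DD)$ membership in (i) follows by combining these bounds with continuity elsewhere. The integral $\int|\theta|^{(\alpha-2\lambda)p}|\theta|^{2\lambda}d\theta$ is finite exactly when $(2\lambda-\alpha)p<2\lambda+1$. So (i) must be read with a restriction on $p$ of this kind.

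\textbf{Main obstacle.} The hard step is Step 3 in the range $-1<\alpha\le0$, where the Laplace-integral representation fails. There I would first differentiate in $r$ (the operator $r\partial_r$ multiplies the coefficients by $n$), which converts the case to $\gamma+1>0$, and then integrate the resulting asymptotics radially.
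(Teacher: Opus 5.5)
The paper gives no proof of this lemma. It only says the assertions are special cases of Askey--Wainger, Theorems 1 and 3. Your route (reduce to the model series $G_\gamma$, then use a Laplace representation through the generating function) is therefore an independent argument, and for $\alpha>0$ it goes through:
\begin{itemize}
\item Step 1 is correct, since $\alpha+M+1>2\lambda$.
\item For $\gamma>0$ the formula $G_\gamma(re^{i\theta})=\Gamma(\gamma)^{-1}\int_0^\infty t^{\gamma-1}\big[|1-re^{-t}e^{i\theta}|^{-2\lambda}-1\big]\,dt$ is exact, not merely ``$\approx$''.
\item Because $|1-\rho e^{i\theta}|\ge\frac12|1-re^{i\theta}|$ for $0\le\rho\le r\le1$, this formula gives continuity on $\overline{\DD}\setminus\{1\}$ by dominated convergence. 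The Ces\`aro machinery of Step 2 is then unnecessary.
\item Positivity plus scaling gives (iii), and (iv) for $0<\alpha<2\lambda$. This needs $c_{\alpha,0}>0$, not just $c_{\alpha,0}\neq0$, for an unsigned $\asymp$.
\end{itemize}
Your remark on (i) is also correct. For $\alpha<2\lambda$, $F\in L^p_\lambda(\partial\DD)$ only when $(2\lambda-\alpha)p<2\lambda+1$, so the lemma is imprecise as stated. The paper only uses $\alpha=2\lambda$, where every $p<\infty$ works.

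The gap is the range $-1<\alpha<0$. The case $\alpha=0$ is harmless because $G_0(z)=|1-z|^{-2\lambda}-1$ explicitly. There are three problems in this range.
\begin{itemize}
\item Step 2 does not reach it. Bounded $(C,k)$ means only give Ces\`aro sums of size $O(n^k)$. Paired with $\Delta^{k+1}n^{-\gamma}=O(n^{-\gamma-k-1})$, this leaves $\sum O(n^{-\gamma-1})$, which diverges for $\gamma\le0$.
\item Your remedy in Step 3 runs the wrong way. $r\partial_r$ multiplies coefficients by $n$, so it sends $n^{-\gamma}$ to $n^{-(\gamma-1)}$. The true relation is $G_\gamma=r\partial_rG_{\gamma+1}$, and two-sided bounds for a derivative cannot be obtained by integrating or differentiating $\asymp$-asymptotics.
\item Once the Laplace integral is differentiated, the integrand is no longer positive, so your lower-bound argument is lost.
\end{itemize}
What works is the regularised formula $n^{-\gamma}=\Gamma(\gamma)^{-1}\int_0^\infty t^{\gamma-1}(e^{-nt}-1)\,dt$, valid for $-1<\gamma<0$. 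It gives
\[ G_\gamma(re^{i\theta})=\frac{1}{\Gamma(\gamma)}\int_0^\infty t^{\gamma-1}\Big[|1-re^{-t}e^{i\theta}|^{-2\lambda}-|1-re^{i\theta}|^{-2\lambda}\Big]\,dt , \]
which yields continuity off $z=1$ as before. At $r=1$, however, the bracket changes sign where $1-e^{-t}=4\sin^2(\theta/2)$, i.e.\ at $t\asymp\theta^2$. So the lower bound has to come from scaling, not positivity:
\begin{itemize}
\item Replacing the bracket by $(t^2+\theta^2)^{-\lambda}-|\theta|^{-2\lambda}$ costs $O(t|\theta|^{-2\lambda})$ for $t\le|\theta|$, and only lower-order terms elsewhere.
\item The substitution $t=|\theta|u$ turns the model into $|\theta|^{\gamma-2\lambda}\Gamma(\gamma)^{-1}\int_0^\infty u^{\gamma-1}\big[(1+u^2)^{-\lambda}-1\big]\,du$.
\item Both $\Gamma(\gamma)$ and this integral are negative, so the constant is positive, and (iv) follows for $-1<\alpha<0$.
\end{itemize}
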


The assertions in the above lemma are special cases of those in \cite[Theorems 1 and 3]{AW}

\begin{proposition}
The function defined by
\begin{align}\label{example-1}
f_0(z)=\sum_{n=1}^{\infty}\frac{\phi_{n}^{\lambda}(z)}{n^{\lambda+1}}, \qquad |z|<1,
\end{align}
is in ${\mathfrak{B}}_{\lambda}(\DD)$, but not bounded on $\DD$.
\end{proposition}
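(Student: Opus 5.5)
We must prove two things: $f_0$ is in the Bloch space, and $f_0$ is unbounded. Both reduce to kernel estimates or Gegenbauer asymptotics already available in the paper.

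\textbf{Bloch membership.} By (\ref{Tzphi-2}), $D_z(z\phi_n)=(n+\lambda+1)\phi_n$. Hence
$$
D_z(zf_0(z))=\sum_{n\ge1}\frac{n+\lambda+1}{n^{\lambda+1}}\phi_n(z).
$$
The key step is to write this as an integral of a kernel of type (\ref{h-kernel-1}) against a fixed point. Choose $w_0=1$, so that $\overline{\phi_n(1)}=\phi_n(1)=\epsilon_n^{-1}\frac{n+2\lambda}{2\lambda}P_n^\lambda(1)$ by (\ref{basis-1}). Since $\epsilon_n^{-1}$ and $P_n^\lambda(1)=(2\lambda)_n/n!$ admit full asymptotic expansions in powers of $n$, we have $\phi_n(1)\asymp n^{\lambda}$ with such an expansion. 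Thus the coefficient
$$
a(n)=\frac{n+\lambda+1}{n^{\lambda+1}\phi_n(1)}
$$
satisfies (\ref{h-kernel-2}) with $\beta=-2\lambda$; the $n=0$ term is harmless. Then $D_z(zf_0(z))=h_{\lambda,-2\lambda}(z,1)$, understood as a limit $w\to1^-$ (e.g.\ $w=s\to1$, which causes no problem since the kernel bounds are uniform).

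Now apply Lemma \ref{h-kernal-a} according to the size of $\beta=-2\lambda$:
\begin{itemize}
\item if $2\lambda<1$, use (ii): $|h|\lesssim (|1-z|+|1-z|)^{-2\lambda}|1-z|^{2\lambda-1}\lesssim|1-z|^{-1}$;
\item if $2\lambda=1$, use (iii): $|h|\lesssim|1-z|^{-1}\ln 3$;
\item if $2\lambda>1$, use (iv): $|h|\lesssim |1-z|^{-1}$.
\end{itemize}
In all cases $|D_z(zf_0)|\lesssim|1-z|^{-1}\le(1-|z|)^{-1}$, which gives $\|f_0\|_{\mathfrak B_\lambda}<\infty$. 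The main check is that the asymptotic expansion of $1/\phi_n(1)$ has the required form up to order $M$. This follows from Stirling-type expansions of gamma ratios.

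\textbf{Unboundedness.} Restrict to the real segment $z=r\in(0,1)$, where $\sin\theta=0$. By (\ref{basis-1}), $\phi_n(r)=\epsilon_n r^n\frac{n+2\lambda}{2\lambda}P_n^\lambda(1)\asymp n^{\lambda}r^n$, and all terms are positive. Hence
$$
f_0(r)\asymp\sum_{n\ge1} n^{-1}r^n=\ln\frac1{1-r}\to\infty.
$$
This already shows $f_0$ is unbounded.

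Alternatively, one could use Lemma \ref{F-function-a}: the real part of $f_0$ is of the form $F_\alpha$ with $\alpha=2\lambda$, so by (iii) its boundary value behaves like $\ln|\theta|^{-1}$. The direct positive-series argument is simpler, and I would use it.
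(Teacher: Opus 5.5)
Your proof is correct. The Bloch-membership half is the paper's own argument: you rewrite $D_z(zf_0)$ as $h_{\lambda,-2\lambda}(z,1)$ and apply Lemma \ref{h-kernal-a} (ii), (iii), (iv) according as $2\lambda<1$, $2\lambda=1$, $2\lambda>1$.

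There is one slip in that half. Formula (\ref{basis-1}) gives $\phi_n(1)=\epsilon_n\frac{n+2\lambda}{2\lambda}P_n^{\lambda}(1)=\epsilon_n\frac{(2\lambda+1)_n}{n!}=\epsilon_n^{-1}$. It does not give $\epsilon_n^{-1}\frac{n+2\lambda}{2\lambda}P_n^{\lambda}(1)$, which would grow like $n^{3\lambda}$ and produce the wrong $\beta$. Your subsequent claim $\phi_n(1)\asymp n^{\lambda}$ is the right one. With it, your $a(n)$ is exactly the paper's coefficient $\frac{n+\lambda+1}{n^{\lambda+1}}\epsilon_n$.

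For unboundedness you take a genuinely different, more elementary route. The paper notes that $\mathrm{Re}\,f_0$ is a Gegenbauer series of type $F_{2\lambda}$. It then invokes the Askey--Wainger asymptotics of Lemma \ref{F-function-a} (i), (iii) to get boundary values $\asymp\ln|\theta|^{-1}$ near $\theta=0$. You instead use that $\phi_n(r)=\epsilon_n^{-1}r^n>0$ for $0<r<1$, so $f_0(r)=\sum_{n\ge1}r^n/(\epsilon_n n^{\lambda+1})\asymp\sum_{n\ge1}r^n/n=\ln\frac{1}{1-r}$.

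Your argument needs only positivity and $\epsilon_n^{-1}\asymp n^{\lambda}$. It also gives the exact radial growth, which shows that the bound $|f(z)|\lesssim\|f\|_{\mathfrak{B}_{\lambda}}\ln\frac{2}{1-|z|}$ from Section 3 is sharp. The paper's heavier route yields different qualitative information: $\mathrm{Re}\,f_0$ is continuous on $\overline{\DD}\setminus\{1\}$. So the blow-up is confined to the single boundary point $z=1$, where the boundary function has only a logarithmic singularity.
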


\begin{proof}
It follows from (\ref{Tzphi-2}) that
\begin{align*}
D_{z}\left(zf_0(z)\right)=\sum_{n=1}^{\infty}\frac{n+\lambda+1}{n^{\lambda+1}}\phi_{n}^{\lambda}(z), \qquad |z|<1.
\end{align*}
Since (cf. \cite[(4.7.3)]{Sz}) $P_{n}^{\lambda}(1)=(2\lambda)_n/n!$, from (\ref{basis-1}) one has
\begin{align*}
\phi_{n}^{\lambda}(1)=\epsilon_n\times\frac{n+2\lambda}{2\lambda}P_{n}^{\lambda}(1)
=\epsilon_n\times\frac{(2\lambda+1)_n}{n!}
=\epsilon_n^{-1},
\end{align*}
so that
\begin{align*}
D_{z}\left(zf_0(z)\right)=\sum_{n=1}^{\infty}\frac{n+\lambda+1}{n^{\lambda+1}}\,
\epsilon_n\phi_{n}^{\lambda}(z)\overline{\phi_{n}^{\lambda}(1)}, \qquad |z|<1.
\end{align*}
It is easy to see that, for $n\ge1$, the function $n\mapsto\frac{n+\lambda+1}{n^{\lambda+1}}\,\epsilon_n$ has the expansion (\ref{h-kernel-2}) with $\beta=-2\lambda$, and hence $D_{z}\left(zf_0(z)\right)$ is identical with some $h_{\lambda,-2\lambda}(z,1)$. Applying Lemma \ref{h-kernal-a}, part (ii) for $0<\lambda<1/2$, part (iii) for $\lambda=1/2$, and part (iv) for $\lambda>1/2$ respectively, gives
\begin{align*}
|D_{z}\left(zf_0(z)\right)|\lesssim|1-z|^{-1}, \qquad |z|<1,
\end{align*}
so that $(1-|z|^2)|D_{z}\left(zf_0(z)\right)|\lesssim1$ for $|z|<1$. Therefore $f_0\in{\mathfrak{B}}_{\lambda}(\DD)$.

To show that $f_0$ is unbounded on $\DD$, we use (\ref{basis-1}) to get
\begin{align*}
{\mathrm Re}\, f_0(z)=\sum_{n=1}^{\infty}\frac{\epsilon_n}{n^{\lambda+1}}\frac{n+2\lambda}{2\lambda}r^{n}P_{n}^{\lambda}
(\cos\theta), \qquad z=re^{i\theta}\in\DD.
\end{align*}
Obviously for $n\ge1$, the function $n\mapsto\frac{\epsilon_n}{n^{\lambda+1}}\frac{n+2\lambda}{2\lambda}$ has the expansion (\ref{F-function-2}) with $\alpha=2\lambda$, and hence ${\mathrm Re}\, f_0(z)$ is identical with some $F_{2\lambda}(z)$. According to Lemma \ref{F-function-a} (i) and (iii), the function $f_0$ is unbounded on $\DD$.
\end{proof}

\section{Characterization of the $\lambda$-Bloch space by higher operators}

Although, as in (\ref{Bloch-1}), the $\lambda$-Bloch norm $\|\cdot\|_{{\mathfrak{B}}_{\lambda}}$ is defined by the first-order operator $D_z\circ z$,
it can also be characterized by the higher-order operators $(D_z\circ z)^n$ for $n\ge2$, as given in Theorem \ref{derivative-Bloch-a} below.

We shall need the following extension of Lemma \ref{representation-a}.

\begin{lemma}\label{representation-b}
If $f$ is $\lambda$-analytic in $\DD$ and satisfies the condition $(1-|z|^{2})^{\alpha} D_{z}\left(zf(z)\right)\in L^{1}_{\lambda}(\DD)$ for some $\alpha>-1$, then
\begin{align}\label{representation-2}
f(z)=\int_{\DD}D_{w}\left(wf(w)\right)\widetilde{K}_{\lambda,\alpha}(z,w)(1-|w|^{2})^{\alpha}\, d\sigma_{\lambda}(w),\qquad z\in\DD,
\end{align}
where
\begin{align*}
\widetilde{K}_{\lambda,\alpha}(z,w)=\frac{1}{\lambda+1}\sum_{n=0}^{\infty}
\frac{\Gamma(n+\lambda+\alpha+2)}{\Gamma(\alpha+1)\Gamma(n+\lambda+2)}
\phi_{n}(z)\overline{\phi_{n}(w)}.
\end{align*}
\end{lemma}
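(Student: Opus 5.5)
The plan is to verify (\ref{representation-2}) coefficientwise in the orthogonal expansion. We use two facts. First, the functions $\phi_n$ are mutually orthogonal in $L^2_\lambda$ with respect to every radial weight. Second, we need the radial integrals of $|\phi_n|^2$ against $(1-|w|^2)^\alpha$. We work on a smaller disk first and then pass to the limit.

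\emph{Setup.} Write $f(z)=\sum_k c_k\phi_k(z)$. By (\ref{Tzphi-2}),
$$g(w):=D_w(wf(w))=\sum_k (k+\lambda+1)c_k\phi_k(w),$$
and by Proposition \ref{anal-thm}(iii) this series converges absolutely and uniformly on compact subsets of $\DD$.

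\emph{Step 1 (orthogonality).} Fix $z\in\DD$. The kernel series $\widetilde K_{\lambda,\alpha}(z,w)$ converges uniformly in $w\in\DD$, because its coefficients grow like $n^{\alpha}$ while $|\phi_n(z)|\lesssim n^\lambda|z|^n$ and $|\phi_n(w)|\lesssim n^\lambda$ by (\ref{phi-bound-1}). Hence on $\DD_\rho$ we may integrate term by term. Writing $w=se^{i\varphi}$, we have $d\sigma_\lambda=c_\lambda s^{2\lambda+1}|\sin\varphi|^{2\lambda}ds\,d\varphi$. Orthonormality of $\{\phi_n(e^{i\varphi})\}$ in $L^2_\lambda(\partial\DD)$, together with the homogeneity $\phi_n(se^{i\varphi})=s^n\phi_n(e^{i\varphi})$, gives
$$\int_{|w|<\rho}\phi_k(w)\overline{\phi_n(w)}(1-|w|^2)^\alpha d\sigma_\lambda(w)=\delta_{kn}(2\lambda+2)\int_0^\rho s^{2n+2\lambda+1}(1-s^2)^\alpha ds.$$
The factor $2\lambda+2$ comes from $c_\lambda=(2\lambda+2)\tilde c_\lambda$.

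\emph{Step 2 (the constant).} As $\rho\to1$, the radial integral tends to
$$\tfrac12 B(n+\lambda+1,\alpha+1)=\frac{\Gamma(n+\lambda+1)\Gamma(\alpha+1)}{2\,\Gamma(n+\lambda+\alpha+2)}.$$
Multiplying by $(2\lambda+2)$, by $(n+\lambda+1)c_n$, and by the kernel coefficient $\frac{\Gamma(n+\lambda+\alpha+2)}{(\lambda+1)\Gamma(\alpha+1)\Gamma(n+\lambda+2)}$, the Gamma factors cancel to leave exactly $c_n$. Summing over $n$ reproduces $f(z)$.

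\emph{Step 3 (justifying the limit; the main obstacle).} Two interchanges must be justified.

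On $\DD_\rho$, the double series in $k$ and $n$ converges uniformly, so term-by-term integration is legitimate and yields
$$I_\rho(z)=\sum_n c_n\,\frac{(n+\lambda+1)\int_0^\rho s^{2n+2\lambda+1}(1-s^2)^\alpha ds}{\tfrac12 B(n+\lambda+1,\alpha+1)}\,\phi_n(z).$$

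Next, let $\rho\to1$ on both sides. On the left, $I_\rho(z)$ converges to the full integral in (\ref{representation-2}) by dominated convergence. This uses the hypothesis $(1-|w|^2)^\alpha g\in L^1_\lambda$ together with the boundedness of $\widetilde K_{\lambda,\alpha}(z,\cdot)$ on $\DD$ for fixed $z$. On the right, each factor multiplying $c_n\phi_n(z)$ lies in $[0,1]$ and tends to $1$. Moreover $\sum|c_n||\phi_n(z)|<\infty$ for fixed $|z|<1$, again by Proposition \ref{anal-thm}(iii). So dominated convergence for series gives the limit $\sum c_n\phi_n(z)=f(z)$.

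The delicate point is the uniform-in-$w$ bound on the kernel for fixed $z$. This holds since $\sum n^{\alpha+2\lambda}|z|^n<\infty$, and that is all the argument needs.
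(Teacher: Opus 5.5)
Your argument is correct and is essentially the paper's proof. You expand $f=\sum c_n\phi_n$, use orthogonality of the $\phi_n$ against the radial weight $(1-|w|^2)^{\alpha}$ with the Beta-integral value $\frac{(\lambda+1)\Gamma(\alpha+1)\Gamma(n+\lambda+1)}{\Gamma(n+\lambda+\alpha+2)}$ of $\int_{\DD}|\phi_n|^2(1-|w|^2)^{\alpha}d\sigma_{\lambda}$, and integrate the kernel series termwise; your truncation to $|w|<\rho$ followed by dominated convergence just makes explicit the limiting argument the paper leaves implicit.

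One slip to fix: in your displayed formula for $I_\rho(z)$ the numerator should not contain the factor $(n+\lambda+1)$. It has already cancelled against $\Gamma(n+\lambda+2)=(n+\lambda+1)\Gamma(n+\lambda+1)$ in the kernel coefficient, and as written the multiplier would tend to $n+\lambda+1$ rather than $1$, contradicting your next sentence.
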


\begin{proof}
For $f(z)=\sum_{n=0}^{\infty}c_{n}\phi_{n}(z)$, it follows from (\ref{Tzphi-2}) that
$$
D_{z}\left(zf(z)\right)=\sum_{n=0}^{\infty}(n+\lambda+1)c_n\phi_{n}^{\lambda}(z),\qquad z\in\DD.
$$
Since $\{\phi_{n}(e^{i\theta})\}_{n=0}^{\infty}$ is an orthonormal set in $L_{\lambda}^2(\partial\DD)$, for $n\in\NN_0$ we have
\begin{align}\label{orthogonal-constant-1}
\int_{\DD}\left|\phi_{n}(z)\right|^2\,(1-|z|^2)^{\alpha}\,d\sigma_{\lambda}(z)
&=(2\lambda+2)\int_0^1r^{2n+2\lambda+1}(1-r^2)^{\alpha}\,dr \nonumber\\
&=\frac{(\lambda+1)\Gamma(\alpha+1)\Gamma(n+\lambda+1)}{\Gamma(n+\lambda+\alpha+2)},
\end{align}
so that
\begin{align*}
\int_{\DD}\overline{\phi_{n}(z)}D_{z}\left(zf(z)\right)\,(1-|z|^2)^{\alpha}\,d\sigma_{\lambda}(z)
=\frac{(\lambda+1)\Gamma(\alpha+1)\Gamma(n+\lambda+2)}{\Gamma(n+\lambda+\alpha+2)}\,c_n
\end{align*}
for $(1-|z|^{2})^{\alpha} D_{z}\left(zf(z)\right)\in L^{1}_{\lambda}(\DD)$. Finally termwise integration for
$D_{w}\left(wf(w)\right)\widetilde{K}_{\lambda,\alpha}(z,w)$ over $\DD$ with respect to the measure $(1-|w|^2)^{\alpha}d\sigma_{\lambda}(w)$ proves
\begin{align*}
\int_{\DD}D_{w}\left(wf(w)\right)\widetilde{K}_{\lambda,\alpha}(z,w)(1-|w|^{2})^{\alpha}\,d\sigma_{\lambda}(w)
=\sum_{n=0}^{\infty}c_{n}\phi_{n}(z)=f(z),\qquad z\in\DD.
\end{align*}
The proof of the lemma is finished.
\end{proof}

\begin{theorem}\label{derivative-Bloch-a}
If $f$ is $\lambda$-analytic in $\DD$ and $n\in\NN$ but $n\ge2$, then $f\in{\mathfrak{B}}_{\lambda}(\DD)$ if and only if $(1-|z|^{2})^n(D_{z}\circ z)^nf(z)$ is bounded on $\DD$; and moreover
\begin{align}\label{derivative-Bloch-1}
\|f\|_{{\mathfrak{B}}}\asymp\sup_{z\in\DD}(1-|z|^{2})^n|(D_{z}\circ z)^nf(z)|.
\end{align}
\end{theorem}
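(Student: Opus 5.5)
Write $T=D_z\circ z$, so that by (\ref{Tzphi-2}) $T\phi_k=(k+\lambda+1)\phi_k$. For $f=\sum c_k\phi_k$ this gives $T^nf=\sum (k+\lambda+1)^n c_k\phi_k$. Set $g=Tf$; then $T^nf=T^{n-1}g$. The plan is to prove both inequalities in (\ref{derivative-Bloch-1}) using the integral representation of Lemma \ref{representation-b}, differentiating the kernel under the integral sign. The resulting kernels will be estimated with Lemma \ref{h-kernal-a}(i) together with the elementary inequalities (\ref{inequality-1}) and (\ref{inequality-2}).

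\emph{Step 1: $\sup(1-|z|^2)^n|T^nf|\lesssim\|f\|_{{\mathfrak{B}}_\lambda}$.} Let $f\in{\mathfrak{B}}_\lambda$. Then $(1-|w|^2)^{\alpha}Tf\in L^1_\lambda$ for every $\alpha>0$. Apply Lemma \ref{representation-b} with some fixed $\alpha>0$, say $\alpha=1$, and apply $T^n$ in $z$ termwise; this is legitimate because the series converge locally uniformly. We obtain
$$
T^nf(z)=\int_{\DD}Tf(w)\,H(z,w)(1-|w|^2)^{\alpha}\,d\sigma_\lambda(w),\qquad
H(z,w)=\sum_k (k+\lambda+1)^n\frac{\Gamma(k+\lambda+\alpha+2)}{(\lambda+1)\Gamma(\alpha+1)\Gamma(k+\lambda+2)}\phi_k(z)\overline{\phi_k(w)}.
$$
The coefficients have an asymptotic expansion of type (\ref{h-kernel-2}) with $\beta=n+\alpha$, by Stirling's formula for the gamma ratio. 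Hence Lemma \ref{h-kernal-a}(i) gives
$$
|H(z,w)|\lesssim\frac{(|1-z\bar w|+|1-zw|)^{-2\lambda}}{|1-z\bar w|}\left(|1-z\bar w|^{-n-\alpha}+|1-zw|^{-n-\alpha}\right).
$$
Using $|Tf(w)|\le\|f\|_{{\mathfrak{B}}}(1-|w|^2)^{-1}$, the proof reduces to showing
$$
\int_{\DD}|H(z,w)|(1-|w|^2)^{\alpha-1}\,d\sigma_\lambda(w)\lesssim(1-|z|^2)^{-n}.
$$
As in the proof of Proposition \ref{Bloch-b}, the term with $|1-zw|$ is handled by the symmetry $\varphi\mapsto-\varphi$. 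The weight $|\sin\varphi|^{2\lambda}$ is absorbed by $(|1-z\bar w|+|1-zw|)^{-2\lambda}$ via (\ref{inequality-2}). What remains is the classical Forelli--Rudin type integral
$$
\int_0^1\int_{-\pi}^{\pi}\frac{(1-s)^{\alpha-1}\,d\varphi\,ds}{(1-rs+|\sin((\theta-\varphi)/2)|)^{n+\alpha+1}}\asymp(1-r)^{-n}.
$$
This holds because $n+\alpha+1>\alpha+1$: integrating in $\varphi$ gives $(1-rs)^{-n-\alpha}$, and integrating this against $(1-s)^{\alpha-1}$ gives $(1-r)^{-n}$.

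\emph{Step 2: $\|f\|_{{\mathfrak{B}}_\lambda}\lesssim\sup(1-|z|^2)^n|T^nf|=:S$.} Assume $S<\infty$ and put $G=T^nf$. Then $(1-|w|^2)^{n-1+\alpha}G\in L^1_\lambda$ for $\alpha>0$. I would prove an analogue of Lemma \ref{representation-b} with $T$ replaced by $T^n$ and weight $(1-|w|^2)^{\gamma}$, where $\gamma=n+\alpha-1$; in fact it suffices to write $Tf$ itself as
$$
Tf(z)=\int_{\DD}T^nf(w)\,L(z,w)(1-|w|^2)^{\gamma}\,d\sigma_\lambda(w).
$$
The same orthogonality computation (\ref{orthogonal-constant-1}) gives the coefficients of $L$:
$$
(k+\lambda+1)^{1-n}\,\frac{\Gamma(k+\lambda+\gamma+2)}{(\lambda+1)\Gamma(\gamma+1)\Gamma(k+\lambda+1)},
$$
which are of order $k^{\gamma+2-n}=k^{1+\alpha}$. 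So $\beta=1+\alpha>0$, and Lemma \ref{h-kernal-a}(i) applies again. Bounding $|T^nf(w)|\le S(1-|w|^2)^{-n}$ reduces everything to the same Forelli--Rudin integral, now with $w$-weight $(1-s)^{\alpha-1}$ and exponent $2+\alpha$, giving $(1-r)^{-1}$. Hence $(1-|z|^2)|Tf(z)|\lesssim S$.

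\emph{Main obstacle.} The expected main difficulty is checking that the kernel coefficients really admit the full asymptotic expansion (\ref{h-kernel-2}) up to order $M$. This should follow from the standard asymptotic expansion of $\Gamma(k+a)/\Gamma(k+b)$ in powers of $(k+1)^{-1}$, multiplied by a power of $(k+\lambda+1)$ re-expanded about $k+1$. A second difficulty is justifying the termwise integration, including the $L^1$ condition needed in Step 2. For the latter, I would first work on the dilates $f_\rho(z)=f(\rho z)$ and then let $\rho\to1$, noting that $T$ commutes with dilation.
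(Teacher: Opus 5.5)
Your proposal is correct. Step 1 is the paper's argument: for $\alpha=1$ the kernel of Lemma \ref{representation-b} is exactly $\widetilde{K}_\lambda$, so your $H$ is $(D_z\circ z)^n\widetilde{K}_\lambda(z,w)$ with $\beta=n+1$, and the Bloch bound cancels the weight $(1-|w|^2)$, leaving the same integral with exponent $n+2$.

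The converse is where you differ. The paper uses descending induction. For each $\ell=n,\dots,2$ it applies Lemma \ref{representation-b} to $(D_z\circ z)^{\ell-1}f$ with $\alpha=\ell$. The weight $(1-|w|^2)^{\ell}$ then absorbs the assumed growth of $(D_w\circ w)^{\ell}f$ exactly, and the kernel $\widetilde{K}_{\lambda,\ell}$ has $\beta=\ell$. Its $L^1_\lambda$-norm in $w$ is $\lesssim(1-r)^{1-\ell}$, which is where $\ell\ge2$ is needed, since $\ell=1$ would give a logarithm. Iterating down to $\ell=2$ gives the Bloch bound.

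You instead invert $(D_z\circ z)^{n-1}$ in one step, using a kernel with the extra factor $(k+\lambda+1)^{1-n}$ and the weight $(1-|w|^2)^{n+\alpha-1}$. Your coefficients are right: they follow from (\ref{orthogonal-constant-1}) with $\gamma=n+\alpha-1$ in place of $\alpha$. They are of order $k^{1+\alpha}$, so $\beta=1+\alpha>0$. The leftover weight $(1-s)^{\alpha-1}$ against exponent $2+\alpha$ does yield $(1-r)^{-1}$; with $\alpha=1$ the weight cancels the growth exactly and the exponent is $3$.

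Your route replaces the induction with a single kernel estimate. The cost is a routine new representation formula and a check of (\ref{h-kernel-2}) for a negative power times a gamma ratio. The paper's route needs no new lemma, because it only reuses Lemma \ref{representation-b} as stated.

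The dilation argument you propose for justification is unnecessary. The $L^1_\lambda$ condition follows at once from $S<\infty$ and $\alpha>0$. Termwise integration is justified as in Lemma \ref{representation-b}: integrate over $|w|<\rho$, let $\rho\to1$, and use that the kernel series converges uniformly in $w$ for fixed $z$.
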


\begin{proof}
Suppose $f\in{\mathfrak{B}}_{\lambda}(\DD)$. By the formula (\ref{representation-1}),
\begin{align*}
(D_{z}\circ z)^nf(z)=\int_{\DD}D_{w}\left(wf(w)\right)(1-|w|^{2})\left[(D_{z}\circ z)^n\widetilde{K}_{\lambda}(z,w)\right]d\sigma_{\lambda}(w),
\end{align*}
so that
\begin{align*}
|(D_{z}\circ z)^nf(z)|\le \|f\|_{{\mathfrak{B}}}
\int_{\DD}\left|(D_{z}\circ z)^n\widetilde{K}_{\lambda}(z,w)\right|d\sigma_{\lambda}(w),\qquad z\in\DD.
\end{align*}
But from (\ref{Tzphi-2}) and (\ref{kernel-1}) it follows that
\begin{align*}
(D_{z}\circ z)^n\widetilde{K}_{\lambda}(z,w)=\frac{1}{\lambda+1}\sum_{k=0}^{\infty}
(k+\lambda+2)(k+\lambda+1)^n\phi_{k}(z)\overline{\phi_{k}(w)},
\end{align*}
and by Lemma \ref{h-kernal-a}(i) with $\beta=n+1$,
\begin{align*}
\left|(D_{z}\circ z)^n\widetilde{K}_{\lambda}(z,w)\right|\lesssim \frac{|1-z\overline{w}|^{-1}}{(|1-z\overline{w}|+|1-zw|)^{2\lambda}}
\left(\frac{1}{|1-z\overline{w}|^{n+1}}+\frac{1}{|1-zw|^{n+1}}\right)
\end{align*}
for $z,w\in\DD$. Thus for $z\in\DD$,
\begin{align*}
|(D_{z}\circ z)^nf(z)|
\lesssim
\int_{\DD}\frac{\|f\|_{{\mathfrak{B}}}|1-z\overline{w}|^{-1}}{(|1-z\overline{w}|+|1-zw|)^{2\lambda}}
\left(\frac{1}{|1-z\overline{w}|^{n+1}}+\frac{1}{|1-zw|^{n+1}}\right)d\sigma_{\lambda}(w).
\end{align*}
For $z=re^{i\theta}$, $w=se^{i\varphi}\in\DD$, on account of the inequalities in (\ref{inequality-1}) and (\ref{inequality-2}) we have
\begin{align*}
|(D_{z}\circ z)^nf(z)|
\lesssim \|f\|_{{\mathfrak{B}}}\int_0^1\int_{-\pi}^{\pi}
\left[\Psi_{r,\theta}(s,\varphi)+\Psi_{r,\theta}(s,-\varphi)\right]|\sin\varphi|^{2\lambda}d\varphi ds,
\end{align*}
where
\begin{align*}
 \Psi_{r,\theta}(s,\varphi)
 =\frac{\left(1-rs+\left|\sin(\theta-\varphi)/2\right|\right)^{-n-2}}{\left(1-rs+|\sin\theta|+|\sin\varphi|\right)^{2\lambda}}.
\end{align*}
Thus
\begin{align*}
|(D_{z}\circ z)^nf(z)|
\lesssim \|f\|_{{\mathfrak{B}}}\int_0^1\int_{-\pi}^{\pi}\frac{d\varphi ds}{\left(1-rs+\left|\sin(\theta-\varphi)/2\right|\right)^{n+2}},
\end{align*}
and after elementary calculations,
\begin{align*}
|(D_{z}\circ z)^nf(z)|
\lesssim \frac{\|f\|_{{\mathfrak{B}}}}{(1-r)^n},
\end{align*}
so that $(1-|z|^2)^n|(D_{z}\circ z)^nf(z)|\lesssim\|f\|_{{\mathfrak{B}}}$ for $|z|<1$.

Conversely, assume that $(1-|z|^{2})^n(D_{z}\circ z)^nf(z)$ is bounded on $\DD$. We shall prove, for $\ell=2,\cdots,n$,
\begin{align}\label{derivative-Bloch-2}
\sup_{z\in\DD}(1-|z|^2)^{\ell-1}|(D_{z}\circ z)^{\ell-1}f(z)|
\lesssim \sup_{z\in\DD}(1-|z|^2)^{\ell}|(D_{z}\circ z)^{\ell}f(z)|,
\end{align}
so that
$$
\sup_{z\in\DD}(1-|z|^2)|(D_{z}\circ z)f(z)|
\lesssim \sup_{z\in\DD}(1-|z|^2)^n|(D_{z}\circ z)^nf(z)|
$$
by descending induction. Thus (\ref{derivative-Bloch-1}) is gained.

To show (\ref{derivative-Bloch-2}), we apply (\ref{representation-2}) to the function $(D_{z}\circ z)^{\ell-1}f(z)$ instead of $f$ and with $\alpha=\ell$, to obtain
\begin{align*}
(D_{z}\circ z)^{\ell-1}f(z)=\int_{\DD}(1-|w|^{2})^{\ell}(D_{w}\circ w)^{\ell}f(w)\widetilde{K}_{\lambda,\ell}(z,w)d\sigma_{\lambda}(w),\qquad z\in\DD,
\end{align*}
where
\begin{align*}
\widetilde{K}_{\lambda,\ell}(z,w)=\frac{1}{\lambda+1}\sum_{k=0}^{\infty}
\frac{\Gamma(k+\lambda+\ell+2)}{\Gamma(\ell+1)\Gamma(k+\lambda+2)}
\phi_{k}(z)\overline{\phi_{k}(w)}.
\end{align*}
Thus for $z\in\DD$,
\begin{align}\label{derivative-Bloch-3}
|(D_{z}\circ z)^{\ell-1}f(z)|\le \sup_{\zeta\in\DD}(1-|\zeta|^2)^{\ell}|(D_{\zeta}\circ \zeta)^{\ell}f(\zeta)| \int_{\DD}|\widetilde{K}_{\lambda,\ell}(z,w)|d\sigma_{\lambda}(w).
\end{align}
By Lemma \ref{h-kernal-a}(i) with $\beta=\ell$,
\begin{align*}
\int_{\DD}|\widetilde{K}_{\lambda,\ell}(z,w)|d\sigma_{\lambda}(w)
\lesssim
\int_{\DD}\frac{|1-z\overline{w}|^{-1}}{(|1-z\overline{w}|+|1-zw|)^{2\lambda}}
\left(\frac{1}{|1-z\overline{w}|^{\ell}}+\frac{1}{|1-zw|^{\ell}}\right)d\sigma_{\lambda}(w).
\end{align*}
For $z=re^{i\theta}$, $w=se^{i\varphi}\in\DD$, on account of the inequalities in (\ref{inequality-1}) and (\ref{inequality-2}) we have
\begin{align*}
\int_{\DD}|\widetilde{K}_{\lambda,\ell}(z,w)|d\sigma_{\lambda}(w)
\lesssim\int_0^1\int_{-\pi}^{\pi}
\left[\widetilde{\Psi}_{r,\theta}(s,\varphi)+\widetilde{\Psi}_{r,\theta}(s,-\varphi)\right]|\sin\varphi|^{2\lambda}d\varphi ds,
\end{align*}
where
\begin{align*}
 \widetilde{\Psi}_{r,\theta}(s,\varphi)
 =\frac{\left(1-rs+\left|\sin(\theta-\varphi)/2\right|\right)^{-\ell-1}}{\left(1-rs+|\sin\theta|+|\sin\varphi|\right)^{2\lambda}}.
\end{align*}
Thus
\begin{align*}
\int_{\DD}|\widetilde{K}_{\lambda,\ell}(z,w)|\,d\sigma_{\lambda}(w)
\lesssim\int_0^1\int_{-\pi}^{\pi}\frac{d\varphi ds}{\left(1-rs+\left|\sin(\theta-\varphi)/2\right|\right)^{\ell+1}},
\end{align*}
and again, direct calculations show
\begin{align*}
\int_{\DD}|\widetilde{K}_{\lambda,\ell}(z,w)|\,d\sigma_{\lambda}(w)
\lesssim \frac{1}{(1-r)^{\ell-1}}, \qquad z=re^{i\theta}\in\DD.
\end{align*}
Finally applying this to (\ref{derivative-Bloch-3}) gives
$$
(1-|z|^2)^{\ell-1}|(D_{z}\circ z)^{\ell-1}f(z)|\lesssim\sup_{\zeta\in\DD}(1-|\zeta|^2)^{\ell}|(D_{\zeta}\circ \zeta)^{\ell}f(\zeta)|
$$
for $|z|<1$. The inequality (\ref{derivative-Bloch-2}) is proved, and the proof of the theorem is finished.
\end{proof}

\section{Boundedness of an integral operator from $L^{\infty}(\DD)$ onto ${\mathfrak{B}}_{\lambda}$}

We consider a general integral operator $T_{\lambda,\alpha}$ involving the parameter $\alpha$, of which the $\lambda$-Bergman projection $P_{\lambda}$ (cf. (\ref{Bergman-k-1}) and (\ref{Bergman-projection-1})) is a special case, i.\,e., $T_{\lambda,0}=P_{\lambda}$. It will be proved that $T_{\lambda,\alpha}$ for $\alpha>-1$ is bounded from $L^{\infty}(\DD)$ onto the $\lambda$-Bloch space ${\mathfrak{B}}_{\lambda}(\DD)$, and as an application, the dual of the $\lambda$-Bergman space $A^1_{\lambda}(\DD)$ is isomorphic to ${\mathfrak{B}}_{\lambda}(\DD)$.

For $\alpha>-1$, we consider the operator $T_{\lambda,\alpha}$ defined by
\begin{align}\label{integral-operator-1}
(T_{\lambda,\alpha}f)(z)=
\int_{\DD}f(w)K_{\lambda,\alpha}(z,w)(1-|w|^2)^{\alpha}d\sigma_{\lambda}(w),\qquad z\in\DD,
\end{align}
where
\begin{align}\label{integral-kernel-1}
K_{\lambda,\alpha}(z,w)=\frac{1}{\lambda+1}\sum_{n=0}^{\infty}
\frac{\Gamma(n+\lambda+\alpha+2)}{\Gamma(\alpha+1)\Gamma(n+\lambda+1)}
\phi_{n}(z)\overline{\phi_{n}(w)}.
\end{align}

The following lemma is necessary.

\begin{lemma}\label{integral-operator-a}
If $f\in{\mathfrak{B}}_{\lambda}(\DD)$, then the function $\psi$ given by
\begin{align}\label{psi-1}
\psi(z)=\frac{1-|z|^2}{\alpha+1}\left[D_{z}\left(zf(z)\right)+(\alpha+1)f(z)\right]
\end{align}
is bounded on $\DD$ and $\|\psi\|_{L^{\infty}}\lesssim\|f\|_{{\mathfrak{B}}_{\lambda}}$. Moreover $T_{\lambda,\alpha}\psi=f$.
\end{lemma}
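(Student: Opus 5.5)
The boundedness of $\psi$ and the identity $T_{\lambda,\alpha}\psi=f$ are proved separately. For boundedness, the first term $(1-|z|^2)D_z(zf(z))$ is bounded by $\|f\|_{{\mathfrak{B}}_{\lambda}}$ by definition. The second term $(1-|z|^2)f(z)$ is controlled by Proposition \ref{Bloch-b} (the radial growth estimate):
$$(1-|z|^2)|f(z)|\lesssim \|f\|_{{\mathfrak{B}}_{\lambda}}(1-|z|^2)\ln\frac{2}{1-|z|},$$
and $t\ln(2/t)$ is bounded for $0<t\le1$. Hence $\|\psi\|_{L^\infty}\lesssim\|f\|_{{\mathfrak{B}}_{\lambda}}$. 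In particular $\psi\in L^1_\lambda(\DD)$, so $T_{\lambda,\alpha}\psi$ is defined.

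For the reproducing identity, expand $f=\sum_k c_k\phi_k$. By (\ref{Tzphi-2}),
$$D_z(zf(z))+(\alpha+1)f(z)=\sum_k (k+\lambda+\alpha+2)c_k\phi_k(z),$$
so
$$\psi(w)=\frac{1-|w|^2}{\alpha+1}\sum_k (k+\lambda+\alpha+2)c_k\phi_k(w).$$
Next compute the coefficients of $T_{\lambda,\alpha}\psi$ by pairing against $\overline{\phi_n(w)}(1-|w|^2)^{\alpha}$. Using the orthogonality of $\phi_n$ on circles together with (\ref{orthogonal-constant-1}) with $\alpha$ replaced by $\alpha+1$,
$$\int_{\DD}\psi(w)\overline{\phi_n(w)}(1-|w|^2)^{\alpha}d\sigma_\lambda(w)=\frac{(n+\lambda+\alpha+2)c_n}{\alpha+1}\cdot\frac{(\lambda+1)\Gamma(\alpha+2)\Gamma(n+\lambda+1)}{\Gamma(n+\lambda+\alpha+3)}=\frac{(\lambda+1)\Gamma(\alpha+1)\Gamma(n+\lambda+1)}{\Gamma(n+\lambda+\alpha+2)}\,c_n .$$
Multiplying by the coefficient of $K_{\lambda,\alpha}$ in (\ref{integral-kernel-1}) gives exactly $c_n$. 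Summing over $n$ then gives $T_{\lambda,\alpha}\psi(z)=\sum c_n\phi_n(z)=f(z)$.

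The main obstacle is justifying the interchange of summation and integration, since the series for $\psi$ does not converge on all of $\DD$ in a controlled way. I would handle it as follows. Fix $z\in\DD$. The kernel series $K_{\lambda,\alpha}(z,w)$ converges uniformly in $w\in\DD$, because $|\phi_n(z)\phi_n(w)|\lesssim n^{2\lambda}|z|^n$ by (\ref{phi-bound-1}) and the coefficients grow polynomially. Since $\psi(w)(1-|w|^2)^{\alpha}\in L^1_\lambda$ (as $\alpha>-1$ and $\psi$ is bounded), we may integrate term by term in the kernel, which reduces everything to the coefficient integrals above. Each coefficient integral is computed as a limit over $|w|<\rho$, $\rho\to1$: on that disk the series for $\psi$ converges uniformly by Proposition \ref{anal-thm}(iii), and dominated convergence (with bound $\|\psi\|_\infty|\phi_n|(1-|w|^2)^\alpha$) passes to $\rho\to1$. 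The finite-$\rho$ radial integrals converge to the Beta-function values as $\rho\to1$.
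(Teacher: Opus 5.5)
Your proposal is correct and follows the paper's proof essentially step for step. Boundedness of $\psi$ comes from the Bloch norm together with the logarithmic growth estimate. The identity $T_{\lambda,\alpha}\psi=f$ comes from the same coefficient computation, using the radial formula for $\int_{\DD}|\phi_n|^2(1-|w|^2)^{\alpha+1}\,d\sigma_\lambda(w)$. Your justification of the interchange (uniform convergence of the kernel series in $w$ for fixed $z$, and a limit over the discs $|w|<\rho$ with $\rho\to1^-$ for the coefficient integrals) is sound, and it supplies detail that the paper only asserts as ``termwise integration.''
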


\begin{proof}
By Definition \ref{Bloch-definition-a} and Proposition \ref{Bloch-b}, one has $\|\psi\|_{L^{\infty}}\lesssim\|f\|_{{\mathfrak{B}}_{\lambda}}$. Thus it remains to show $P_{\lambda,\alpha}\psi=f$ on $\DD$.

Assume that $f(z)=\sum_{n=0}^{\infty}c_{n}\phi_{n}(z)$. It follows from (\ref{Tzphi-2}) and (\ref{psi-1}) that
$$
\psi(z)=(1-|z|^2)\sum_{n=0}^{\infty}\frac{n+\lambda+\alpha+2}{\alpha+1}c_n\phi_{n}^{\lambda}(z), \qquad z\in\DD.
$$
By means of orthogonality of $\{\phi_{n}(e^{i\theta})\}_{n=0}^{\infty}$ in $L_{\lambda}^2(\partial\DD)$, for $n\in\NN_0$ we have
\begin{align*}
\int_{\DD}\psi(z)\overline{\phi_{n}(z)}(1-|z|^2)^{\alpha}\,d\sigma_{\lambda}(z)
=\frac{n+\lambda+\alpha+2}{\alpha+1}\,c_n\int_{\DD}\left|\phi_{n}(z)\right|^2\,(1-|z|^2)^{\alpha+1}d\sigma_{\lambda}(z),
\end{align*}
and then, on account of (\ref{orthogonal-constant-1}) with $\alpha+1$ instead of $\alpha$,
\begin{align*}
\int_{\DD}\psi(z)\overline{\phi_{n}(z)}(1-|z|^2)^{\alpha}\,d\sigma_{\lambda}(z)
=\frac{(\lambda+1)\Gamma(\alpha+1)\Gamma(n+\lambda+1)}{\Gamma(n+\lambda+\alpha+2)}\,c_n.
\end{align*}

Now from (\ref{integral-operator-1}) and (\ref{integral-kernel-1}), termwise integration for
$\psi(w)K_{\lambda,\alpha}(z,w)(1-|w|^2)^{\alpha}$ over $\DD$ with respect to the measure $d\sigma_{\lambda}(w)$ gives
\begin{align*}
(T_{\lambda,\alpha}\psi)(z)=
\int_{\DD}\psi(w)K_{\lambda,\alpha}(z,w)(1-|w|^2)^{\alpha}\,d\sigma_{\lambda}(w)
=\sum_{n=0}^{\infty}c_{n}\phi_{n}(z)=f(z)
\end{align*}
for $z\in\DD$. The proof of the lemma is completed.
\end{proof}

%The main result of this section is the following theorem.

\begin{theorem}\label{integral-operator-b}
For $\alpha>-1$, the operator $T_{\lambda,\alpha}$ defined by (\ref{integral-operator-1}) and (\ref{integral-kernel-1}) is bounded from $L^{\infty}(\DD)$ onto the $\lambda$-Bloch space ${\mathfrak{B}}_{\lambda}(\DD)$.
\end{theorem}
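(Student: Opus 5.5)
Surjectivity is already settled by Lemma \ref{integral-operator-a}: for $f\in{\mathfrak{B}}_{\lambda}(\DD)$ the function $\psi$ in (\ref{psi-1}) belongs to $L^{\infty}(\DD)$ and satisfies $T_{\lambda,\alpha}\psi=f$. What remains is to show that for $g\in L^{\infty}(\DD)$ the function $T_{\lambda,\alpha}g$ is $\lambda$-analytic and that $\|T_{\lambda,\alpha}g\|_{{\mathfrak{B}}_{\lambda}}\lesssim\|g\|_{L^{\infty}}$.

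\emph{$\lambda$-analyticity.} For $|z|\le r<1$ the series (\ref{integral-kernel-1}) converges uniformly in $w\in\DD$. This follows from (\ref{phi-bound-1}) and the polynomial growth of the coefficients $\Gamma(n+\lambda+\alpha+2)/\Gamma(n+\lambda+1)\asymp n^{\alpha+1}$. Since $(1-|w|^2)^{\alpha}\,d\sigma_{\lambda}$ is a finite measure when $\alpha>-1$, we may integrate termwise. This gives
\[
T_{\lambda,\alpha}g(z)=\sum_n b_n\phi_n(z),\qquad |b_n|\lesssim n^{\alpha+1+\lambda}\|g\|_{L^\infty}.
\]
By Proposition \ref{anal-thm}(iii), or by Lemma \ref{analytic-converge-a}, $T_{\lambda,\alpha}g$ is $\lambda$-analytic. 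The same justification allows applying $D_z\circ z$ under the integral sign.

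\emph{The Bloch bound.} By (\ref{Tzphi-2}),
\[
D_z\bigl(zK_{\lambda,\alpha}(z,w)\bigr)=\frac{1}{\lambda+1}\sum_{n}\frac{(n+\lambda+1)\Gamma(n+\lambda+\alpha+2)}{\Gamma(\alpha+1)\Gamma(n+\lambda+1)}\phi_n(z)\overline{\phi_n(w)}.
\]
The coefficient is a ratio of Gamma functions times a linear factor, so it has an asymptotic expansion of type (\ref{h-kernel-2}) with $\beta=\alpha+2>0$. This follows from the standard expansion of $\Gamma(n+a)/\Gamma(n+b)$ in powers of $n+1$. Lemma \ref{h-kernal-a}(i) then gives
\[
|D_z(zK_{\lambda,\alpha}(z,w))|\lesssim \frac{(|1-z\bar w|+|1-zw|)^{-2\lambda}}{|1-z\bar w|}\Bigl(|1-z\bar w|^{-\alpha-2}+|1-zw|^{-\alpha-2}\Bigr).
\]
Hence
\[
|D_z(zT_{\lambda,\alpha}g(z))|\le\|g\|_{L^\infty}\int_{\DD}|D_z(zK_{\lambda,\alpha}(z,w))|(1-|w|^2)^{\alpha}\,d\sigma_\lambda(w).
\]
To estimate the integral, write $z=re^{i\theta}$, $w=se^{i\varphi}$ and use (\ref{inequality-1}) and (\ref{inequality-2}), exactly as in the proofs of Proposition \ref{Bloch-b} and Theorem \ref{derivative-Bloch-a}. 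The term with $|1-zw|$ becomes the term with $\varphi\mapsto-\varphi$, since $|1-zw|=|1-z\overline{\bar w}|$ and the measure is symmetric. The factor $|\sin\varphi|^{2\lambda}$ is absorbed by $(1-rs+|\sin\theta|+|\sin\varphi|)^{-2\lambda}$. This reduces everything to
\[
\int_0^1\int_{-\pi}^{\pi}\frac{(1-s)^{\alpha}\,d\varphi\,ds}{(1-rs+|\sin((\theta-\varphi)/2)|)^{\alpha+3}}.
\]
Integrating in $\varphi$ gives $\lesssim(1-rs)^{-\alpha-2}$. Then, using $1-rs\ge\max(1-r,1-s)$ and $\alpha>-1$,
\[
\int_0^1(1-s)^{\alpha}(1-rs)^{-\alpha-2}\,ds\lesssim(1-r)^{-1}.
\]
Multiplying by $1-|z|^2$ gives $\|T_{\lambda,\alpha}g\|_{{\mathfrak{B}}_{\lambda}}\lesssim\|g\|_{L^\infty}$.

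The main obstacle is the kernel step. One must check that the coefficient sequence genuinely fits the hypothesis (\ref{h-kernel-2}), including the required number $M$ of terms, which follows from the Gamma-ratio asymptotics. One must also handle $-1<\alpha<0$ carefully in the last one-dimensional integral, where the weight $(1-s)^{\alpha}$ is singular. Splitting the integral at $s=r$ handles this: for $s<r$ we have $1-rs\ge1-s$, and for $s\ge r$ we have $1-rs\ge1-r$.
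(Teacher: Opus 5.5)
Your proposal is correct and follows the paper's proof: surjectivity comes from Lemma \ref{integral-operator-a}, the kernel bound from Lemma \ref{h-kernal-a}(i) with $\beta=\alpha+2$, and the inequalities (\ref{inequality-1}) and (\ref{inequality-2}) reduce the estimate to the same double integral. The only difference is the last elementary step: you integrate in $\varphi$ first and split the $s$-integral at $s=r$, whereas the paper integrates by parts in $s$ to remove the singular weight $(1-s)^{\alpha}$ when $-1<\alpha<0$; both routes give $(1-r)^{-1}$, and yours covers all $\alpha>-1$ at once.
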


\begin{proof}
According to Lemma \ref{integral-operator-a}, it suffices to prove the boundedness of the operator $T_{\lambda,\alpha}$ from $L^{\infty}(\DD)$ into ${\mathfrak{B}}_{\lambda}(\DD)$.

For $\psi\in L^{\infty}(\DD)$, set $f(z)=(T_{\lambda,\alpha}\psi)(z)$, i.\,e.,
\begin{align*}
f(z)=
\int_{\DD}\psi(w)K_{\lambda,\alpha}(z,w)(1-|w|^2)^{\alpha}d\sigma_{\lambda}(w),\qquad z\in\DD.
\end{align*}
It follows that
\begin{align}\label{integral-operator-2}
D_{z}\left(zf(z)\right)=\int_{\DD}\psi(w)D_{z}\left(zK_{\lambda,\alpha}(z,w)\right)(1-|w|^2)^{\alpha}d\sigma_{\lambda}(w),\qquad z\in\DD;
\end{align}
and from (\ref{Tzphi-2}) and (\ref{integral-kernel-1}),
\begin{align*}
D_{z}\left(zK_{\lambda,\alpha}(z,w)\right)=\sum_{n=0}^{\infty}
\frac{(n+\lambda+1)\Gamma(n+\lambda+\alpha+2)}{(\lambda+1)\Gamma(\alpha+1)\Gamma(n+\lambda+1)}
\phi_{n}(z)\overline{\phi_{n}(w)}, \qquad |zw|<1.
\end{align*}

Since
\begin{align*}
\frac{(n+\lambda+1)\Gamma(n+\lambda+\alpha+2)}{(\lambda+1)\Gamma(\alpha+1)\Gamma(n+\lambda+1)}
=\sum_{j=0}^{M}c_j(n+1)^{\alpha+2-j}+O\left((n+1)^{\alpha+1-M}\right),
\end{align*}
where $M=[\alpha+2\lambda+3]$, appealing to Lemma \ref{h-kernal-a} (i) with $\beta=\alpha+2$ we have
\begin{align*}
\left|D_{z}\left(zK_{\lambda,\alpha}(z,w)\right)\right|\lesssim \frac{(|1-z\overline{w}|+|1-zw|)^{-2\lambda}}{|1-z\overline{w}|}
\left(\frac{1}{|1-z\overline{w}|^{\alpha+2}}+\frac{1}{|1-zw|^{\alpha+2}}\right)
\end{align*}
for $|zw|<1$.
Furthermore, with $z=re^{i\theta}$, $w=se^{i\varphi}\in\DD$, using the inequalities (\ref{inequality-1}) and (\ref{inequality-2}) we get
\begin{align*}
\left|D_{z}\left(zK_{\lambda,\alpha}(z,w)\right)\right|\lesssim
\widetilde{\Phi}_{r,\theta}(s,\varphi)+\widetilde{\Phi}_{r,\theta}(s,-\varphi),
\end{align*}
where
\begin{align*}
 \widetilde{\Phi}_{r,\theta}(s,\varphi)
 =\frac{\left(1-rs+\left|\sin(\theta-\varphi)/2\right|\right)^{-\alpha-3}}{\left(1-rs+|\sin\theta|+|\sin\varphi|\right)^{2\lambda}}.
\end{align*}
Now substituting this into (\ref{integral-operator-2}) yields, for $z=re^{i\theta}\in\DD$,
\begin{align*}
\left|D_{z}\left(zf(z)\right)\right|
\lesssim \|\psi\|_{L^{\infty}}\int_0^1\int_{-\pi}^{\pi}\frac{(1-s)^{\alpha}}{(1-rs+|\sin(\theta-\varphi)/2|)^{\alpha+3}}\,d\varphi ds.
\end{align*}

Note that the critical case to be considered is that for $-1<\alpha<0$. We take integration by parts with respect to $s$, to obtain
\begin{align*}
\left|D_{z}\left(zf(z)\right)\right|
\lesssim \|\psi\|_{L^{\infty}}\int_{-\pi}^{\pi}\left(1+\int_0^1\frac{(1-s)^{\alpha+1}}{(1-rs+|\sin(\theta-\varphi)/2|)^{\alpha+4}}\,ds\right)d\varphi.
\end{align*}
Consequently,
\begin{align*}
\left|D_{z}\left(zf(z)\right)\right|
\lesssim \|\psi\|_{L^{\infty}}\left(1+\int_{-\pi}^{\pi}\int_0^1\frac{dsd\varphi}{(1-rs+|\sin(\theta-\varphi)/2|)^{3}}\right).
\end{align*}
This again implies
\begin{align*}
\left|D_{z}\left(zf(z)\right)\right|
\lesssim \frac{\|\psi\|_{L^{\infty}}}{1-r}, \qquad z\in\DD,
\end{align*}
so that $(1-|z|^2)|D_{z}\left(zf(z)\right)|\lesssim\|\psi\|_{L^{\infty}}$ for $z\in\DD$. Therefore $T_{\lambda,\alpha}\psi=f\in{\mathfrak{B}}_{\lambda}(\DD)$ and $\|T_{\lambda,\alpha}\psi\|_{{\mathfrak{B}}_{\lambda}}\lesssim\|\psi\|_{L^{\infty}}$.
The proof of the theorem is completed.
\end{proof}

%Note that the operator $T_{\lambda,\alpha}$ is an analog of the operator considered in \cite{Ch1} on the usual analytic function spaces.

We have the following corollary immediately.

\begin{corollary}\label{Bergman-projection-Bloch-a}
The $\lambda$-Bergman projection $P_{\lambda}$, defined by (\ref{Bergman-projection-1}), is a bounded operator from $L^{\infty}(\DD)$ onto the $\lambda$-Bloch space ${\mathfrak{B}}_{\lambda}(\DD)$.
\end{corollary}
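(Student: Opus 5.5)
The plan is to specialize Theorem \ref{integral-operator-b} to $\alpha=0$. First I will check that the kernel $K_{\lambda,0}$ defined by (\ref{integral-kernel-1}) is exactly the $\lambda$-Bergman kernel $K_{\lambda}$ in (\ref{Bergman-k-1}). At $\alpha=0$ the coefficient of $\phi_n(z)\overline{\phi_n(w)}$ is
\begin{align*}
\frac{1}{\lambda+1}\cdot\frac{\Gamma(n+\lambda+2)}{\Gamma(1)\Gamma(n+\lambda+1)}=\frac{n+\lambda+1}{\lambda+1},
\end{align*}
and this matches the coefficient in (\ref{Bergman-k-1}). The weight $(1-|w|^2)^{\alpha}$ in (\ref{integral-operator-1}) equals $1$ when $\alpha=0$. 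Hence $T_{\lambda,0}f=P_{\lambda}f$ for every $f$ for which the integral (\ref{Bergman-projection-1}) makes sense. In particular this holds for every $f\in L^{\infty}(\DD)$, because $L^\infty(\DD)\subset L^1_\lambda(\DD)$, the measure $\sigma_\lambda$ being a probability measure.

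Once this identification is in place, Theorem \ref{integral-operator-b} with $\alpha=0>-1$ gives two things directly. It gives boundedness, $\|P_\lambda\psi\|_{{\mathfrak{B}}_{\lambda}}\lesssim\|\psi\|_{L^\infty}$. It also gives surjectivity: by Lemma \ref{integral-operator-a}, each $f\in{\mathfrak{B}}_{\lambda}$ equals $P_\lambda\psi$ with
\begin{align*}
\psi(z)=(1-|z|^2)\left[D_z\left(zf(z)\right)+f(z)\right],
\end{align*}
and this $\psi$ lies in $L^\infty(\DD)$.

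There is no real obstacle here. The only step that needs care is the coefficient identity above, which reduces to $\Gamma(x+1)=x\Gamma(x)$.
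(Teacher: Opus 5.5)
Your proposal is correct and is the paper's own argument: the paper notes at the start of the last section that $T_{\lambda,0}=P_{\lambda}$, and it states the corollary as an immediate consequence of Theorem \ref{integral-operator-b}, with surjectivity coming from Lemma \ref{integral-operator-a}. Your explicit check that the $\alpha=0$ kernel coefficient reduces to $(n+\lambda+1)/(\lambda+1)$ spells out that identification, which the paper leaves implicit.
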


Finally we turn to the dual relation of the $\lambda$-Bergman space $A^1_{\lambda}(\DD)$ and ${\mathfrak{B}}_{\lambda}(\DD)$.

\begin{theorem}\label{duality-Bergman-Bloch-a}
The dual space $A^1_{\lambda}(\DD)^*$ is isomorphic to the $\lambda$-Bloch space ${\mathfrak{B}}_{\lambda}(\DD)$ in the sense that, each $L\in A^1_{\lambda}(\DD)^*$ can be represented by
\begin{align*}
L(f)=\lim_{t\rightarrow1^-}\int_{t\DD}f(z)\overline{g(z)}\,d\sigma_{\lambda}(z),\qquad f\in A^1_{\lambda}(\DD),
\end{align*}
with a unique function $g\in{\mathfrak{B}}_{\lambda}(\DD)$ satisfying
$$
C'\|g\|_{{\mathfrak{B}}_{\lambda}}\le\|L\|\le C''\|g\|_{{\mathfrak{B}}_{\lambda}},
$$
where the constants $C'$ and $C''$ are independent of $g$.
\end{theorem}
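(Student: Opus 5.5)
We prove the two directions separately: given $g\in{\mathfrak{B}}_{\lambda}$ we show the limit defines a bounded functional, and given $L$ we produce $g$. The tools are the representation of Lemma \ref{integral-operator-a} with $\alpha=0$, i.e.\ $P_{\lambda}\psi=g$ with $\psi(w)=(1-|w|^2)\big[D_w(wg(w))+g(w)\big]$, and Corollary \ref{Bergman-projection-Bloch-a}.

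\emph{Step 1 (from $g$ to $L$).} Let $g\in{\mathfrak{B}}_{\lambda}$. We first compute the pairing on $t\DD$ for a $\lambda$-analytic polynomial $f=\sum c_n\phi_n$ and $g=\sum b_n\phi_n$. Orthogonality of the $\phi_n$ on circles gives
$$
\int_{t\DD}f\bar g\,d\sigma_\lambda=(2\lambda+2)\sum_n c_n\bar b_n\int_0^t r^{2n+2\lambda+1}\,dr .
$$
Similarly, (\ref{orthogonal-constant-1}) with $\alpha=1$ gives
$$
\int_{\DD}f(z)\overline{\psi(z)}\,d\sigma_\lambda(z)=\sum_n c_n\bar b_n\,\frac{n+\lambda+2}{1}\cdot\frac{(\lambda+1)\Gamma(n+\lambda+1)}{\Gamma(n+\lambda+3)}=\sum_n c_n\bar b_n .
$$
This is exactly the $t\to1^-$ limit of the first identity. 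Hence for polynomials
$$
\lim_{t\to1^-}\int_{t\DD}f\bar g\,d\sigma_\lambda=\int_{\DD}f\bar\psi\,d\sigma_\lambda ,
$$
and the right side is bounded by $\|\psi\|_{L^\infty}\|f\|_{A^1_\lambda}\lesssim\|g\|_{{\mathfrak{B}}_{\lambda}}\|f\|_{A^1_\lambda}$.

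To extend this to all $f\in A^1_\lambda$ with a genuine limit, I would not rely on density alone. Instead I would use the dilations $f_t(z)=f(tz)$. The dilation is itself a $\lambda$-analytic function, since $\phi_n(tz)=t^n\phi_n(z)$. A symmetrization on the coefficients shows
$$
\int_{t\DD}f\bar g\,d\sigma_\lambda=\int_{\DD}(\text{some radial average of }f)\,\bar\psi\,d\sigma_\lambda .
$$
A cleaner route is to show directly that $\int_{t\DD}f\bar g\,d\sigma_\lambda$ converges for every $f\in A^1_\lambda$. Write $\int_{t\DD}f\bar g=\int_{\DD}f\,\overline{g\chi_{t\DD}}$ and use the reproducing formula (\ref{reproducing-Bergman-1}) for $f$ together with Fubini. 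This reduces the question to the uniform boundedness of $P_\lambda(g\chi_{t\DD})$-type expressions, which are in turn controlled by the polynomial case, the density of polynomials in $A^1_\lambda$ (a standard consequence of dilation plus the bound on $\int|K_\lambda|$), and an $\varepsilon/3$ argument.

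\emph{Step 2 (from $L$ to $g$).} Given $L\in A^1_\lambda(\DD)^*$, extend it by Hahn--Banach to $L^1_\lambda(\DD)$ with the same norm. The extension is represented by some $\Psi\in L^\infty$ with $\|\Psi\|_\infty=\|L\|$, so that $L(f)=\int f\bar\Psi\,d\sigma_\lambda$. Put $g=P_\lambda\Psi$. By Corollary \ref{Bergman-projection-Bloch-a}, $g\in{\mathfrak{B}}_\lambda$ with $\|g\|_{{\mathfrak{B}}_\lambda}\lesssim\|L\|$. For a polynomial $f$, self-adjointness of $P_\lambda$ and $P_\lambda f=f$ give
$$
\int f\bar\Psi=\int f\,\overline{P_\lambda\Psi}\ \text{on } t\DD\text{ in the limit}.
$$
Concretely, the coefficients of $g$ are $b_n=\frac{n+\lambda+1}{\lambda+1}\langle\Psi,\phi_n\rangle$, and the Step 1 computation then shows $\lim_t\int_{t\DD}f\bar g=\sum c_n\overline{\langle\Psi,\phi_n\rangle}\cdot\text{const}$, which equals $L(f)$. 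Density of polynomials together with Step 1 then gives the formula on all of $A^1_\lambda$.

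\emph{Step 3 (uniqueness and the lower bound).} Uniqueness follows by testing on $f=\phi_n$: the pairing recovers each $b_n$ up to a nonzero factor. For the lower bound, the $g$ built in Step 2 satisfies $\|g\|_{\mathfrak B}\lesssim\|L\|$, and by uniqueness it is the $g$ of the statement. This gives $C'\|g\|_{{\mathfrak{B}}_\lambda}\le\|L\|$; the upper bound is Step 1.

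The main obstacle is justifying the existence of the limit in $t$ for general $f\in A^1_\lambda$. Here I would try the identity $\int_{t\DD}f\bar g=\int_{\DD}f_{t}^{*}\bar\psi$ for a suitable averaged version $f_t^{*}$ of $f$, and then check that $f_t^*\to f$ in $A^1_\lambda$.
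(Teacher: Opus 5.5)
\textbf{There is a genuine gap: the passage from polynomials to arbitrary $f\in A^1_\lambda(\DD)$, which is the content of the theorem, is left open.}

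Your polynomial computation and the construction $g=P_\lambda\Psi$ are fine. There is one slip: $\int_\DD f\bar\psi\,d\sigma_\lambda=\sum_n\frac{\lambda+1}{n+\lambda+1}c_n\bar b_n$, not $\sum_n c_n\bar b_n$. This is still the $t\to1^-$ limit of your first formula, so nothing breaks.

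The gap is the step you flag yourself. You never show that $\lim_{t\to1^-}\int_{t\DD}f\bar g\,d\sigma_\lambda$ exists for \emph{every} $f\in A^1_\lambda(\DD)$ and is bounded by $C\|g\|_{\mathfrak{B}_\lambda}\|f\|_{A^1_\lambda}$. Since $f\bar g$ need not be integrable, density of polynomials cannot do this alone. An $\varepsilon/3$ argument needs $|\int_{t\DD}h\bar g\,d\sigma_\lambda|\lesssim\|h\|_{A^1_\lambda}$ uniformly in $t$, and that is exactly the estimate to be proved.

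Your Fubini reduction does not supply it as stated. Comparing coefficients gives $P_\lambda(g\chi_{t\DD})=t^{2\lambda+2}g(t^2\,\cdot)$. Its sup norm tends to $\infty$ as $t\to1^-$ whenever $g$ is unbounded (e.g.\ $g=f_0$ from (\ref{example-1})), so there is no uniform $L^\infty$ bound. It is uniformly bounded only in $\mathfrak{B}_\lambda$, and you give no argument turning that into a $t$-uniform bound for the pairing with $h\in A^1_\lambda$; that is where the work lies.

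Your last idea does give an exact identity: $\int_{t\DD}f\bar g\,d\sigma_\lambda=\int_\DD f_t^*\bar\psi\,d\sigma_\lambda$ with $f_t^*=t^{2\lambda+2}f(t^2\,\cdot)$. But you then need $f(r\,\cdot)\to f$ in $A^1_\lambda(\DD)$. You do not prove this, it is not among the facts quoted in the paper, and it does not follow from the logarithmic kernel bound $\int_\DD|K_\lambda(z,w)|\,d\sigma_\lambda(w)\lesssim\ln\frac{2}{1-|z|}$ you allude to. Step 2 inherits the gap, since its passage from polynomials to all of $A^1_\lambda$ rests on Step 1.

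The paper applies Lemma \ref{representation-a} to $g_s$ and rescales, which gives
\[
\int_{s\DD}f\bar g\,d\sigma_\lambda=\frac{1}{s^2}\int_{s\DD}F(w)\,\overline{D_w(wg(w))}\,(s^2-|w|^2)\,d\sigma_\lambda(w),\qquad F=f+\sum_{n=0}^{\infty}\frac{c_n}{n+\lambda+1}\phi_n .
\]
It then uses the multiplier theorem of \cite{QW1} to get $\|F\|_{A^1_\lambda}\lesssim\|f\|_{A^1_\lambda}$. Dominated convergence with majorant $\|g\|_{\mathfrak{B}_\lambda}|F|$ then gives existence of the limit and the bound together.

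Within your own framework there is a cheaper repair: dilate $g$ rather than $f$.
\begin{enumerate}
\item With $G_t=g(t^2\,\cdot)$, one has $\int_{t\DD}f\bar g\,d\sigma_\lambda=t^{2\lambda+2}\int_\DD f\,\overline{G_t}\,d\sigma_\lambda$.
\item Set $\psi_G=(1-|z|^2)[D_z(zG)+G]$. Since $G_t$ is bounded, the maps $f\mapsto\int_\DD f\,\overline{G_t}\,d\sigma_\lambda$ and $f\mapsto\int_\DD f\,\overline{\psi_{G_t}}\,d\sigma_\lambda$ are both continuous on $A^1_\lambda$ and agree on polynomials. They therefore coincide by \cite[Theorem 5.3]{LW1}.
\item Because $D_z(zG_t(z))=[D_u(ug(u))]_{u=t^2z}$, you get $\|G_t\|_{\mathfrak{B}_\lambda}\le\|g\|_{\mathfrak{B}_\lambda}$. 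Hence $\|\psi_{G_t}\|_{L^\infty}\lesssim\|g\|_{\mathfrak{B}_\lambda}$ uniformly in $t$, and $\psi_{G_t}\to\psi_g$ pointwise.
\item Dominated convergence with majorant $C\|g\|_{\mathfrak{B}_\lambda}|f|$ gives $\lim_{t\to1^-}\int_{t\DD}f\bar g\,d\sigma_\lambda=\int_\DD f\bar\psi_g\,d\sigma_\lambda$ for all $f\in A^1_\lambda(\DD)$, without the multiplier theorem.
\end{enumerate}
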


\begin{proof}
Assume that $f$ and $g$ are $\lambda$-analytic in $\DD$, and for $s\in(0,1)$, set $f_s(z)=f(sz)$ and $g_s(z)=g(sz)$. Applying Lemma \ref{representation-a} to $g_s$ we have
\begin{align}\label{dual-representation-1}
\int_{\DD}f_s(z)\overline{g_s(z)}d\sigma_{\lambda}(z)=\int_{\DD}F_s(w)\overline{D_{w}\left(wg_s(w)\right)}(1-|w|^{2})d\sigma_{\lambda}(w),
\end{align}
where $F_s(z)=F(sz)$ and
\begin{align*}
F(z)=\int_{\DD}f(w)\widetilde{K}_{\lambda}(z,w)\,d\sigma_{\lambda}(w).
\end{align*}
If $f(z)=\sum_{n=0}^{\infty}c_{n}\phi_{n}(z)$, it follows from \cite[(20)]{LW1} that
\begin{align*}
c_n=\frac{n+\lambda+1}{\lambda+1}\int_{\DD}f(w)\overline{\phi_{n}(w)}\,d\sigma_{\lambda}(w),
\end{align*}
and then, from (\ref{kernel-1}),
\begin{align*}
F(z)=f(z)+\sum_{n=0}^{\infty}\frac{1}{n+\lambda+1}c_n\phi_{n}(z).
\end{align*}
But by \cite[Theorem 4.4]{QW1}, $\{(n+\lambda+1)^{-1}\}_{n=0}^{\infty}$ is a multiplier from $A^1_{\lambda}(\DD)$ to $H_{\lambda}^1(\DD)$, so that
$F\in A^{1}_{\lambda}(\DD)$ whenever $f\in A^{1}_{\lambda}(\DD)$, and $\|F\|_{A_{\lambda}^1}\lesssim\|f\|_{A_{\lambda}^1}$.

Now on the two sides of the equation (\ref{dual-representation-1}), we make substitution of variables as $z\mapsto z/s$ and $w\mapsto w/s$ respectively, to get
\begin{align*}
\int_{s\DD}f(z)\overline{g(z)}d\sigma_{\lambda}(z)=\frac{1}{s^2}\int_{s\DD}F(w)\overline{D_{w}\left(wg(w)\right)}(s^2-|w|^{2})d\sigma_{\lambda}(w).
\end{align*}
For $f\in A^{1}_{\lambda}(\DD)$ and $g\in {\mathfrak{B}}_{\lambda}(\DD)$,
$$
\left|F(w)\overline{D_{w}\left(wg(w)\right)}(s^2-|w|^{2})\right|\le\|g\|_{{\mathfrak{B}}_{\lambda}}|F(w)|\in L^{1}_{\lambda}(\DD),
$$
and then, by Lebesgue's dominated convergence theorem, the linear functional
\begin{align}\label{A1-functional-2}
L(f):=\lim_{s\rightarrow1^-}\int_{s\DD}f(z)\overline{g(z)}\,d\sigma_{\lambda}(z)=\int_{\DD}F(w)\overline{D_{w}\left(wg(w)\right)}(1-|w|^{2})d\sigma_{\lambda}(w)
\end{align}
is well defined for $f\in A^1_{\lambda}(\DD)$, and moreover $|L(f)|\lesssim\|g\|_{{\mathfrak{B}}_{\lambda}}\|f\|_{A_{\lambda}^1}$.

Conversely, by the Hahn-Banach theorem every $L\in A^1_{\lambda}(\DD)^*$ can be extended to a bounded linear functional on $L^1_{\lambda}(\DD)$ with the same norm, and the Riesz representation theorem implies that $L$ has the following representation
\begin{align*}
L(f)=\int_{\DD}f(z)h(z)d\sigma_{\lambda}(z)
\end{align*}
for all $f\in L^1_{\lambda}(\DD)$, with some $h\in L^{\infty}(\DD)$ satisfying $\|h\|_{L^{\infty}}=\|L\|$.

%Conversely, by the Hahn-Banach theorem every functional $L\in A^1_{\lambda}(\DD)^*$ can be extended to a bounded linear functional on $L^1_{\lambda}(\DD)$ with the same norm, and the Riesz representation theorem yields that
%$L^1_{\lambda}(\DD)^*$ is isometrically isomorphic to $L^{\infty}(\DD)$ in the sense that, each
%$L\in L^1_{\lambda}(\DD)^*$ has a representation as
%\begin{align}\label{L1-functional-2}
%L(f)=\int_{\DD}f(z)h(z)d\sigma_{\lambda}(z)
%\end{align}
%for all $f\in L^1_{\lambda}(\DD)$, with a unique function $h\in L^{\infty}(\DD)$ satisfying $\|h\|_{L^{\infty}}=\|L\|$.

If we put $g(z)=(P_{\lambda}\bar{h})(z)$, then by Corollary \ref{Bergman-projection-Bloch-a}, $g\in {\mathfrak{B}}_{\lambda}(\DD)$ and
\begin{align*}
\|g\|_{{\mathfrak{B}}_{\lambda}}\lesssim\|h\|_{L^{\infty}}=\|L\|.
\end{align*}
The function $g$, by what is just proved, defines a bounded linear functional on $A^1_{\lambda}(\DD)$ according to (\ref{A1-functional-2}); for the sake of distinction, such a functional is denoted by $\widetilde{L}$.

It remains to show that,
for $f\in A^{1}_{\lambda}(\DD)$,
\begin{align}\label{A1-functional-3}
L(f)=\widetilde{L}(f).
\end{align}
For the purpose, it suffices to take $f$ to be a $\lambda$-analytic polynomial by the density result in \cite[Theorem 5.3]{LW1}. But then we can write
\begin{align*}
\widetilde{L}(f)=\int_{\DD}f(z)\overline{g(z)}\,d\sigma_{\lambda}(z)
=\int_{\DD}\int_{\DD}f(z)h(w)K_{\lambda}(w,z)\,d\sigma_{\lambda}(w)d\sigma_{\lambda}(z);
\end{align*}
and furthermore, Fubini's theorem and the reproducing formula (\ref{reproducing-Bergman-1}) readily give us
\begin{align*}
\widetilde{L}(f)=\int_{\DD}f(w)h(w)\,d\sigma_{\lambda}(w)=L(f),
\end{align*}
that is the equality (\ref{A1-functional-3}). The uniqueness is a consequence of the norm equivalence, and the proof of the theorem is finished.
\end{proof}

 \vskip .2in

{\bf Author contributions:}  All authors have contributed equally on the manuscript.
 \vskip .1in

{\bf Data Availability Statement:}  No datasets were generated or analyzed during the current study.
 \vskip .2in

{\bf Declarations}
 \vskip .1in

{\bf Conflict of interest:} The authors declare no conflict of interest.

\end{document}